\documentclass[11pt,leqno]{article}
\usepackage{amsmath, amscd, amsthm, amssymb, graphics, xypic, mathrsfs, setspace, fancyhdr, times, bm, enumitem}
\usepackage[letterpaper,top=1.05in, bottom=1.05in, left=1.05in, right=1.05in]{geometry}
\usepackage[colorlinks=true,pagebackref=true]{hyperref} 
\hypersetup{backref}



\newcommand{\tensor}{\otimes}
\newcommand{\colim}{\operatorname{colim}}
\newcommand{\Spec}{\operatorname{Spec}}

\newcommand{\isomto}{{\stackrel{\sim}{\;\longrightarrow\;}}}
\newcommand{\isomt}{{\stackrel{{\scriptscriptstyle{\sim}}}{\;\rightarrow\;}}}

\newcommand{\sma}{{\scriptstyle{\wedge}}\;}

\newcommand{\Singaone}{\operatorname{Sing}^{\aone}\!\!}

\renewcommand{\hom}{\operatorname{Hom}}

\newcommand{\cplx}{{\mathbb C}}
\newcommand{\Q}{{\mathbb Q}}
\newcommand{\Z}{{\mathbb Z}}
\newcommand{\N}{{\mathbb N}}

\newcommand{\aone}{{\mathbb A}^1}
\newcommand{\pone}{{\mathbb P}^1}

\newcommand{\gm}[1]{{{\mathbf G}_{m}^{#1}}}

\newcommand{\MW}{\mathrm{MW}}
\newcommand{\topo}{\mathrm{top}}

\newcommand{\et}{\text{\'et}}
\newcommand{\ho}[1]{\mathscr{H}({#1})}
\newcommand{\hop}[1]{\mathscr{H}_{\bullet}({#1})}

\newcommand{\bpi}{\bm{\pi}}
\newcommand{\bmu}{\bm{\mu}}
\newcommand{\piaone}{{\bpi}^{\aone}}


\newcommand{\CH}{{\operatorname{CH}}}

\newcommand{\Sm}{\mathrm{Sm}}

\newcommand{\Spc}{\mathrm{Spc}}

\newcommand{\KMW}{{\mathbf K}^{\MW}}

\renewcommand{\H}{{{\operatorname H}}}

\newcommand{\Addresses}{{
 \bigskip
 \footnotesize

 A.~Asok, Department of Mathematics, University of Southern California, 3620 S. Vermont Ave.,
  Los Angeles, CA 90089-2532, United States; \textit{E-mail address:} \url{asok@usc.edu}

  \medskip

 J.~Fasel, Institut Fourier - UMR 5582, Universit\'e Grenoble Alpes, 100, rue des math\'ematiques, F-38402 Saint Martin d'H\`eres; France \textit{E-mail address:} \url{jean.fasel@gmail.com}

 \medskip

 M.J.~Hopkins, Department of Mathematics, Harvard University, One Oxford Street, Cambridge, MA 02138, United States \textit{E-mail address:} \url{mjh@math.harvard.edu}
}}

\newcounter{intro}
\setcounter{intro}{1}

\theoremstyle{plain}
\newtheorem{thm}{Theorem}[subsection]

\newtheorem{lem}[thm]{Lemma}

\newtheorem{prop}[thm]{Proposition}
\newtheorem*{claim*}{Claim}  

\newtheorem*{thm*}{Theorem}
\newtheorem*{problem*}{Problem}

\newtheorem{thmintro}{Theorem}

\newtheorem{questionintro}[thmintro]{Question}

\newtheorem{conjintro}[thmintro]{Conjecture}

\theoremstyle{definition}
\newtheorem{defn}[thm]{Definition}
\newtheorem{construction}[thm]{Construction}

\theoremstyle{remark}
\newtheorem{rem}[thm]{Remark}
\newtheorem{remintro}[thmintro]{Remark}

\newtheorem{ex}[thm]{Example}

\numberwithin{equation}{section}

\begin{document}
\pagestyle{fancy}
\renewcommand{\sectionmark}[1]{\markright{\thesection\ #1}}
\fancyhead{}
\fancyhead[LO,R]{\bfseries\footnotesize\thepage}
\fancyhead[LE]{\bfseries\footnotesize\rightmark}
\fancyhead[RO]{\bfseries\footnotesize\rightmark}
\chead[]{}
\cfoot[]{}
\setlength{\headheight}{1cm}

\author{Aravind Asok\thanks{Aravind Asok was partially supported by National Science Foundation Awards DMS-1254892 and DMS-1802060.} \and Jean Fasel \and Michael J. Hopkins\thanks{Michael J. Hopkins was partially supported by National Science Foundation Awards DMS-0906194, DMS-1510417 and DMS-1810917}}

\title{{\bf Algebraic vector bundles and $p$-local $\aone$-homotopy theory} \\
{\bf [Fibrés vectoriels algébriques et théorie $\aone$-homotopique $p$-locale]}}
\date{}
\maketitle

\begin{abstract}
Using techniques of $\aone$-homotopy theory, we produce motivic lifts of elements in classical homotopy groups of spheres; these lifts provide polynomial maps of spheres and allow us to construct ``low rank" algebraic vector bundles on ``simple" smooth affine varieties of high dimension. 
\end{abstract}

\renewcommand{\abstractname}{R\'esum\'e}
\begin{abstract}
En utilisant des techniques d'homotopie des schémas, nous produisons des relèvements motiviques de certains éléments dans les groupes d'homotopie (instables) de sphères. Ces relèvements nous permettent de construire des fibrés vectoriels de ``petit rang'' sur des variétés algébriques ``simples'' de grande dimension, ainsi que de produire des représentants polynomiaux des éléments considérés. 
\end{abstract}


\section{Introduction}
Fix a base field $k \subset \cplx$ and suppose $X$ is a smooth algebraic $k$-variety.  There is a forgetful map $\mathscr{V}_r(X) \to \mathscr{V}_r^{\topo}(X)$ from the set of isomorphism classes of rank $r$ algebraic vector bundles on $X$ to the set of isomorphism classes of rank $r$ complex topological vector bundles on the complex manifold $X(\cplx)$ (throughout this paper, we abuse notation and write $X$ for $X({\cplx})$).  A complex topological vector bundle lying in the image of this map is called {\em algebraizable}.  In general, the forgetful map is neither injective nor surjective.  A necessary condition for algebraizability of a vector bundle is that the topological Chern classes should be algebraic, i.e., should lie in the image of the cycle class map $\CH^i(X) \to \H^{2i}(X,\Z)$.

The forgetful map from the previous paragraph factors as:
\[
\mathscr{V}_n(X) \longrightarrow [X,\mathrm{Gr}_n]_{\aone} \longrightarrow \mathscr{V}_n^{\topo}(X),
\]
where $[X,\mathrm{Gr}_n]_{\aone}$ is an ``algebraic" homotopy invariant mirroring classical homotopy invariance of topological vector bundles.  In more detail, $\mathrm{Gr}_n$ is an infinite Grassmannian; it may be realized as the ind-scheme $\colim_{N} \mathrm{Gr}_{n,N}$ (see \cite[\S 4 Proposition 3.7]{MV} for further discussion).  The set $[X,\mathrm{Gr}_n]_{\aone}$ is the set of maps between $X$ and $\mathrm{Gr}_n$ in the Morel-Voevodsky $\aone$-homotopy category \cite{MV}.

The set $[X,\mathrm{Gr}_n]_{\aone}$ has a concrete description that we now give.  F. Morel proved that if $X$ is furthermore smooth and affine, then $[X,\mathrm{Gr}_n]_{\aone}$ coincides with the set of isomorphism classes of rank $n$ vector bundles on $X$  and also the quotient of the set of morphisms $X \to \mathrm{Gr}_n$ by the equivalence relation generated by $\aone$-homotopies (see \cite[Theorem 1]{AsokHoyoisWendtI}).  For a  smooth variety $X$, a result of Jouanolou--Thomason \cite[Proposition 4.4]{WeibelHomotopy} guarantees that there exists a smooth affine variety $\tilde{X}$ and a torsor under a vector bundle $\pi: \tilde{X} \to X$; by construction $\pi$ is an isomorphism in the Morel--Voevodsky $\aone$-homotopy category (any such pair $(\tilde{X},\pi)$ is called a Jouanolou device for $X$).  Thus, any element of the set $[X,\mathrm{Gr}_n]_{\aone}$ may be represented by an equivalence class of morphisms $\tilde{X} \to \mathrm{Gr}_n$, i.e., by an actual vector bundle of rank $n$ on $\tilde{X}$; we refer to such equivalence classes as {\em motivic vector bundles of rank $n$}.

In \cite{AsokFaselHopkins}, we used the above factorization to demonstrate the existence of additional cohomological restrictions to algebraizability of a bundle beyond algebraicity of Chern classes.  The obstructions we described relied on the failure of injectivity of the cycle class map.  In this paper, we analyze the opposite situation, i.e., cases where the cycle class map is bijective.  A large class of such varieties is given by ``cellular" varieties (we leave the precise definition of ``cellular" vague, but one may use, e.g., the stably cellular varieties of \cite[Definition 2.10]{DuggerIsaksenCellular}).  To focus the discussion, we formulate the following problem.

\begin{questionintro}
\label{questionintro:toparemotivic}
Let X be a smooth complex variety that is ``cellular'' (e.g., $\mathbb{P}^n$). Is every complex topological vector bundle motivic? In other words, for such an $X$ is the map
\[
[X,\mathrm{Gr}_n]_{\aone} \longrightarrow \mathscr{V}_n^{\topo}(X)
\]
surjective for every integer $n \geq 1$?
\end{questionintro}

Our interest in this question has three sources. First, if $X$ is furthermore affine, then Grauert's Oka principle establishes that every topological vector bundle on $X$ admits a unique holomorphic structure, so the right hand side can be replaced by holomorphic vector bundles.  In that setting, the question above is a special case of a question of Serre \cite[\S 4 (3)]{SerreFSBBKI}.  

Second, one may always analyze the problem of deciding whether a given rank $r$ complex topological vector bundle on a smooth complex algebraic variety is algebraizable in two steps: decide if the given bundle is motivic, and, if it is, decide whether it lies in the image of the map $\mathscr{V}_r(X) \to [X,\mathrm{Gr}_r]_{\aone}$.  The latter question may be phrased more concretely using a Jouanolou device of $X$.  Indeed, a motivic vector bundle on $X$ corresponds to an algebraic vector bundle on a Jouanolou device $(\tilde{X},\pi)$ for $X$, and asking whether a bundle lies in the image of the map $\mathscr{V}_r(X) \to [X,\mathrm{Gr}_r]_{\aone}$ is equivalent to asking whether the given bundle descends along $\pi$.

Third, it is a difficult problem to construct indecomposable algebraic vector bundles of rank $r$ on ${\mathbb P}^n$ when $1 < r < n$; ranks in this range are called ``small."  The problem of constructing vector bundles of small rank was explicitly stated by Schwarzenberger in the 1960s and Mumford called the special case of rank $2$ bundles on ${\mathbb P}^n$, $n \geq 5$, ``the most interesting unsolved problem in projective geometry that [he knew] of" \cite[p. 227]{GIT}.  In a slightly broader context, Evans and Griffith write \cite[p. 113]{EvansGriffith} that small rank bundles ``seem to be rare in nature".  In view of the preceding paragraph, one can view the task of constructing motivic vector bundles of small rank as explicitly producing algebraic vector bundles of small rank if $X$ is affine, and as a preliminary step toward building small rank vector bundles on projective varieties.  


\begin{remintro}
The problem of algebraizability of vector bundles on ${\mathbb P}^n$ has been studied for $n \leq 3$ by various authors.  The case $n = 1$ being immediate, Schwarzenberger resolved the case $n = 2$.  In this case, topological vector bundles are classified by their Chern classes, which may be identified with pairs of integers, and Schwarzenberger \cite{SchwarzenbergerI, SchwarzenbergerII} constructed algebraic vector bundles with prescribed Chern classes.  The algebraizability of vector bundles on ${\mathbb P}^3$ was more subtle.  Schwarzenberger showed that rank $2$ topological bundles on ${\mathbb P}^3$ had additional congruence conditions on Chern classes (stemming from the Riemann--Roch theorem).  Horrocks \cite{Horrocks} showed that there exist rank $2$ vector bundles on ${\mathbb P}^3$ with given Chern classes satisfying this condition.  Atiyah and Rees completed the topological classification of rank $2$ vector bundles on ${\mathbb P}^3$ showing that there is an additional mod $2$ invariant (the $\alpha$-invariant) for rank $2$ vector bundles with even first Chern class.  They then showed that Horrocks' bundles actually provide algebraic representatives for each isomorphism class of rank $2$ topological vector bundles.  Various other topological classification results exist and we refer the reader to \cite{OSS} for more details.
\end{remintro}

\begin{remintro}
Let $k$ be a field (no longer assumed to be a subfield of $\cplx$).  At the moment, we do not know a single example of a smooth $k$-variety $X$ such that the map $\mathscr{V}_n(X) \to [X,\mathrm{Gr}_n]_{\aone}$ is not surjective.  If $X$ is a smooth projective curve over $k$, then it is straightforward to show that $\mathscr{V}_n(X) \to [X,\mathrm{Gr}_n]_{\aone}$ is surjective for any integer $n$.  We will show in future work that surjectivity can be guaranteed for smooth projective surfaces over an infinite field or smooth projective $3$-folds over an algebraically closed field.  For a general smooth $k$-variety $X$, note that if $(\tilde{X},\pi)$ is a Jouanolou device for $X$, then $\tilde{X} \times_X \tilde{X}$ is again a smooth affine variety, and either projection $\tilde{X} \times_X \tilde{X} \to \tilde{X}$ is a torsor under a vector bundle, and therefore an isomorphism in the Morel--Voevodsky $\aone$-homotopy category.  It follows that the two maps $\mathscr{V}_r(\tilde{X}) \to \mathscr{V}_r(\tilde{X} \times_X \tilde{X})$ coincide and are always bijections.  Therefore, given any vector bundle $\mathscr{E}$ on $\tilde{X}$, there exists an isomorphism $\theta: p_1^*\mathscr{E} \isomt p_2^*\mathscr{E}$.  The descent question is tantamount to asking whether $\theta$ may be chosen to satisfy the cocycle condition.
\end{remintro}

The precise goal of this paper is to analyze the algebraizability question (more precisely, Question~\ref{questionintro:toparemotivic}) for a class of ``interesting" topological vector bundles on ${\mathbb P}^n$ introduced by E. Rees and L. Smith.  Let us recall the construction of these topological vector bundles following  \cite{Rees}; we refer to them as {\em Rees bundles} in the sequel.  By a classical result of Serre \cite[Proposition 11]{Serre}, we know that if $p$ is a prime, then the $p$-primary component of $\pi_{4p-3}(S^3)$ is isomorphic to $\Z/p$, generated by the composite of a generator $\alpha_1$ of the $p$-primary component of $\pi_{2p}(S^3)$ and the $(2p-3)$rd suspension of itself; we write $\alpha_1^2$ for this class.  In fact, $\pi_{4p-3}(S^3)$ is the first odd degree homotopy group of $S^3$ with non-trivial $p$-primary torsion.  The map ${\mathbb P}^n \to S^{2n}$ that collapses ${\mathbb P}^{n-1}$ to a point determines a function
\[
[S^{2n-1},S^3] \cong [S^{2n},BSU(2)] \longrightarrow [{\mathbb P}^n,BSU(2)]
\]
Using the fact that $\pi_{4p-3}(S^3)$ is the first non-trivial $p$-torsion in an odd degree homotopy group of $S^3$, Rees showed that the class $\alpha_1^2$ determines a non-trivial rank $2$ vector bundle $\xi_p \in [{\mathbb P}^{2p-1},\mathrm{BSU}(2)]$.  By construction, $\xi_p$ is a non-trivial rank $2$ bundle with trivial Chern classes.

The motivation for Rees' construction originated from results of Grauert--Schneider \cite{GrauertSchneider}.  If the bundles $\xi_p$ were algebraizable, then the fact that they have trivial Chern classes would imply they were necessarily unstable by Barth's results on Chern classes of stable vector bundles \cite[Corollary 1 p. 127]{Barth} (here, stability means slope stability in the sense of Mumford).  Grauert and Schneider analyzed unstable rank $2$ vector bundles on projective space and aimed to prove that such vector bundles were necessarily direct sums of line bundles; this assertion is now sometimes known as the Grauert--Schneider conjecture.  In view of the Grauert--Schneider conjecture, the bundles $\xi_p$ should not be algebraizable.

We show here that the bundles $\xi_p$ are motivic. We still do not know if the bundles $\xi_p$ are algebraic.  Moreover, we deduce the existence of many non-trivial rank $2$ algebraic vector bundles on ``simple" smooth affine varieties of large dimension.  To describe the result precisely, let us write $X_n$ for any Jouanolou device for ${\mathbb P}^n$.   For example, one may use the following model for $X_n$: let $V$ be an $n+1$-dimensional $\cplx$-vector space, and define $X_n$ to be the open subvariety of ${\mathbb P}(V) \times {\mathbb P}(V^{\vee})$ with closed complement the incidence divisor.  The projection onto a factor induces a morphism $X_n \to {\mathbb P}^n$ that is a torsor under the tangent bundle of ${\mathbb P}^n$.

\begin{thmintro}[See Theorem~\ref{thm:reesbundles}]
\label{thmintro:rees}
For every prime number $p$, the bundle $\xi_p$ lifts to a rank $2$ algebraic vector bundle on $X_{2p-1}$.
\end{thmintro}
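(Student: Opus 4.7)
The plan is to produce the desired algebraic vector bundle on $X_{2p-1}$ by constructing $\xi_p$ as an element of $[\mathbb{P}^{2p-1}, BSL_2]_{\aone}$ via a motivic factorization that mirrors Rees's construction, and then using the identification of $[\mathbb{P}^{2p-1}, Gr_2]_{\aone}$ with isomorphism classes of rank $2$ algebraic vector bundles on any Jouanolou device. Concretely, I would aim to build a motivic composite
\[
\mathbb{P}^{2p-1} \longrightarrow \mathbb{P}^{2p-1}/\mathbb{P}^{2p-2} \;\isomt\; T^{\wedge(2p-1)} \longrightarrow BSL_2
\]
in $\ho{k}_{\bullet}$ whose complex realization recovers the composite $\mathbb{P}^{2p-1} \to S^{4p-2} \xrightarrow{\alpha_1^2} BSU(2)$ used in Rees's definition of $\xi_p$. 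Since $SL_2$ is $\aone$-equivalent to $\mathbb{A}^2\setminus 0 \simeq S^{3,2}$, the second map is equivalent data to an element of the motivic homotopy group $\bpi^{\aone}_{4p-2,2p-1}(BSL_2) \cong \bpi^{\aone}_{4p-3,2p-1}(\mathbb{A}^2\setminus 0)$, and the main task is to produce this element and track its Betti realization.

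The first step is largely formal: the standard Morel--Voevodsky computation identifies $\mathbb{P}^n/\mathbb{P}^{n-1} \simeq T^{\wedge n}$ in $\hop{k}$, and $T^{\wedge(2p-1)}$ Betti realizes to $S^{4p-2}$. The collapse map $\mathbb{P}^{2p-1} \to \mathbb{P}^{2p-1}/\mathbb{P}^{2p-2}$ then realizes to the topological collapse, so any motivic map out of $T^{\wedge(2p-1)}$ that realizes correctly will give the desired lift of $\xi_p$.

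The main obstacle is the second step: producing a motivic $\alpha_1^2$, that is, an element of $\bpi^{\aone}_{4p-3,2p-1}(\mathbb{A}^2\setminus 0)_{(p)}$ that Betti realizes to the classical $\alpha_1^2 \in \pi_{4p-3}(S^3)$. My approach is to first construct a motivic lift of $\alpha_1 \in \pi_{2p}(S^3)$ using $p$-local motivic homotopy theory: by working in the $p$-localized stable motivic sphere $\mathbb{S}^{0,0}_{(p)}$ (or equivalently in a $p$-local version of $\hop{k}$ in the relevant range), one identifies a motivic $\alpha_1$ class whose Betti realization is the classical one. Here one needs to verify that the range $(4p-3, 2p-1)$ lies in a stable range where Betti realization is an isomorphism after $p$-localization on the relevant motivic homotopy group, or more delicately, that the classical $\alpha_1$ lifts uniquely modulo terms that do not affect the final composite. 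Once a motivic $\alpha_1$ is in hand, its self-composite gives the motivic $\alpha_1^2$; the fact that Betti realization is compatible with composition ensures we recover the classical Rees construction.

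Finally, the desired rank $2$ algebraic bundle is obtained by combining Morel's theorem with the Jouanolou trick: the composite constructed above determines an element of $[\mathbb{P}^{2p-1}, Gr_2]_{\aone}$, and pulling back along $X_{2p-1} \to \mathbb{P}^{2p-1}$ (which is an $\aone$-equivalence with $X_{2p-1}$ smooth affine) yields, by \cite[Theorem 1]{AsokHoyoisWendtI}, a genuine rank $2$ algebraic vector bundle on $X_{2p-1}$ that maps to $\xi_p$ under the topological realization functor. The hardest part is the construction and Betti-compatibility of the motivic $\alpha_1$ at each prime $p$; everything else reduces to tracking cofiber sequences and applying the results already established in the excerpt.
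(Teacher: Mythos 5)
Your overall skeleton --- collapse $\mathbb{P}^{2p-1}$ onto $\mathbb{P}^{2p-1}/\mathbb{P}^{2p-2}$, feed in a motivic lift of $\alpha_1^2$, then use affine representability and a Jouanolou device to extract an algebraic bundle --- is the same as the paper's. But there is a genuine gap at the step you yourself flag as ``the main task,'' and it is not the one you think. The motivic $\alpha_1$ that one can actually construct (in the paper: from the $p$-local splitting $SL_p \simeq Q_3 \times \cdots \times Q_{2p-1}$ of Theorem~\ref{thm:suslinsplitting} together with the computation $\bpi^{\aone}_{p-1,p+1}(SL_p) \cong \Z/p!$) lives in bidegree $(p-1,p+1)$, so the self-composite $\alpha_1 \circ \Sigma^{p-2,p-1}\alpha_1$ lives in $\bpi^{\aone}_{2p-3,2p}(\mathbb{A}^2\setminus 0)$: simplicial degree $2p-3$, weight $2p$. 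What you need in order to map out of $\mathbb{P}^{2p-1}/\mathbb{P}^{2p-2} \simeq S^{2p-1,2p-1}$ (equivalently, to apply the clutching construction to a morphism $Q_{4p-3}\to Q_3$) is a class in $\bpi^{\aone}_{2p-2,2p-1}$: the same total degree $4p-3$, but weight $2p-1$ rather than $2p$. These are genuinely different sheaves, and your proposed remedy --- that Betti realization is an isomorphism after $p$-localization in a suitable range, so the class ``lifts uniquely modulo terms that do not affect the composite'' --- is not available: realization collapses the entire weight grading, and there is no general mechanism converting a weight-$2p$ class into a weight-$(2p-1)$ one.

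Closing exactly this gap is the main technical content of the paper. It builds a weight-shifting operator $\tau$ out of a motivic mod-$n_\epsilon$ Moore space $M_n$ and a choice of primitive $n$-th root of unity, sending an $n_\epsilon$-torsion class in $\bpi^{\aone}_{a-1,b+1}(\mathscr{X})$ to a class in $\bpi^{\aone}_{a,b}(\mathscr{X})$ well defined modulo multiplication by the symbol $[\tau]\in \K^{MW}_1(k)$ (Theorem~\ref{thm:weightshifting}), and checks via Construction~\ref{construction:explicittau} that this operation is the identity on complex and \'etale realizations. Applying $\tau$ to the $p$-torsion class $\alpha_1^2$ produces the needed $\tau\alpha_1^2 \in \bpi^{\aone}_{2p-2,2p-1}(\mathbb{A}^2\setminus 0)$. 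Note the hypotheses this forces and which are absent from your proposal: the class must be torsion (guaranteed here by the finiteness of $\bpi^{\aone}_{p-1,p+1}(SL_p)$, or more generally by the rational vanishing of Proposition~\ref{prop:geometrictorsion}), and the base field must contain a primitive $p$-th root of unity and not be formally real. As written, your argument has no mechanism for this weight adjustment and would stall at a map $S^{2p-3,2p}\to \mathbb{A}^2\setminus 0$ that cannot be precomposed with the collapse map from $\mathbb{P}^{2p-1}$.
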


The technique involved in the demonstration of Theorem~\ref{thmintro:rees} provides support for the following conjecture.

\begin{conjintro}
	\label{conjintro:motivation}
If $X$ is a smooth complex cellular variety, then every topological vector bundle on $X$ admits a unique motivic lift.  
\end{conjintro}

\begin{remintro}
Further support for Conjecture~\ref{conjintro:motivation} arises from the {\em Wilson space hypothesis} asserting that the $\pone$-infinite loop space for algebraic cobordism is an {\em even} space.  We will discuss that conjecture and its connection to Question~\ref{questionintro:toparemotivic} elsewhere.
\end{remintro}

In a slightly different direction, it is a classical problem to determine which elements of the homotopy groups of spheres admit ``polynomial" representatives.  Indeed, this can be viewed as an incarnation of the philosophy behind the original Atiyah--Bott proof of Bott periodicity \cite[p. 231]{AtiyahBott}.  More precisely, write $S^n \subset {\mathbb R}^{n+1}$ for the standard $n$-sphere defined by $\sum_{i=0}^n x_i^2 = 1$.  By a real algebraic representative of a homotopy class $\alpha \in \pi_n(S^m)$ we simply mean a morphism of real algebraic varieties $S^n \to S^m$ such that the continuous map obtained by taking real points is in the homotopy class $\alpha$.  

The existence of real algebraic representatives for elements of homotopy groups of spheres was studied in the stable setting by \cite{Baum} and, independently, by R. Wood.  Indeed, all elements in the stable homotopy groups of spheres admit polynomial representatives (see Baum \cite[Corollary 2.11]{Baum}; Wood's results remain unpublished).  The situation regarding existence of real algebraic representatives of {\em unstable} homotopy classes is different.  Indeed, R. Wood \cite{WoodSpheres} showed that there are no non-constant polynomial maps $S^n \to S^m$ when $n \geq 2m$.  In particular, this means that as soon as $p$ is a prime number $\geq 3$, the classes $\alpha_1$ and $\alpha_1^2$ cannot be represented by morphisms of real algebraic spheres.  In fact, even when $p = 2$, Wood shows that $\alpha_1$ (in this case a generator of $\pi_4(S^3) \cong \Z/2$) admits no real algebraic representative \cite[Theorem 2]{WoodSpheres}.


Given the paucity of real algebraic representatives of homotopy classes, Wood later broadened the scope of the representation problem \cite{WoodQuadrics} by studying ``complex" polynomial representatives.  Indeed, write $S^n_{\cplx}$ for the standard sphere considered as a complex algebraic variety.  The complex manifold $S^n_{\cplx}$ is diffeomorphic to the tangent bundle of the standard (smooth) $n$-sphere (in particular, homotopy equivalent to the standard $n$-sphere).  By a complex algebraic representative of a homotopy class $\alpha \in \pi_n(S^n)$, we mean a morphism of complex algebraic varieties $S^n_{\cplx} \to S^m_{\cplx}$ such that the associated map of complex manifolds (obtained by taking complex points) represents the homotopy class $\alpha$.  Wood writes \cite[Question 2]{WoodQuadrics}: ``What is the first element in the homotopy groups of spheres which cannot be represented by a polynomial map of quadrics?'' and then remarks that one cannot rule out the possibility that all elements of the unstable homotopy groups of spheres admits complex polynomial representatives.  We present evidence in support of this latter possibility, so that the situation regarding existence of complex algebraic representatives of homotopy classes appears to be essentially the opposite of that for real algebraic representatives.

We write $Q_{2n-1}$ for the smooth affine variety in ${\mathbb A}^{2n}$ defined by the equation $\sum_{i} x_iy_i = 1$ and $Q_{2n}$ for the smooth affine variety in ${\mathbb A}^{2n+1}$ defined by $\sum_i x_iy_i = z(1-z)$; these are the ``split" smooth affine quadrics.  Over the complex numbers, the quadrics defining the standard $n$-sphere become isomorphic to ``split" quadrics of dimension $n$.  From the standpoint of $\aone$-homotopy theory, the varieties $Q_{2n-1}$ and $Q_{2n}$ have the $\aone$-homotopy types of motivic spheres \cite[Theorem 2]{AsokDoranFasel}.  With this notation, the next result shows that we may construct non-constant morphisms from quadrics of arbitrary high dimension to $Q_3$.

\begin{thmintro}[See Proposition~\ref{prop:tautatesuspensionofeta} and Theorem~\ref{thm:nonconstantmaps}]
\label{thmintro:wood}
Over the field of complex numbers, for every prime number $p$, the homotopy classes $\alpha_1$ and $\alpha_1^2$ admit complex polynomial representatives.  In particular, there exist non-constant morphisms $Q_{2p} \to Q_{3}$ and $Q_{4p-3} \to Q_{3}$.
\end{thmintro}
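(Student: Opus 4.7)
The plan is to construct motivic lifts of $\alpha_1$ and $\alpha_1^2$ in appropriate motivic homotopy groups, and then upgrade those classes to actual morphisms of quadrics via the realization of motivic spheres as smooth affine quadrics. By \cite{AsokDoranFasel}, the split affine quadrics $Q_{2n-1}$ and $Q_{2n}$ are $\aone$-weakly equivalent to motivic spheres, and their complex topological realizations are the classical spheres $S^{2n-1}$ and $S^{2n}$. In particular, a morphism of complex varieties $Q_m \to Q_3$ yields an element of $\pi_m(S^3)$ after passing to $\cplx$-points; thus it suffices to produce morphisms whose realizations land in the classes $\alpha_1$ and $\alpha_1^2$.

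For $p=2$ (the content of Proposition~\ref{prop:nonconstantmapsp=2}) I would proceed essentially by hand: the algebraic Hopf map $\A^2 \setminus 0 \to \pone$ corresponds, in the quadric model, to an explicit polynomial map $Q_3 \to Q_2$ whose topological realization is the Hopf fibration. Combining this with a motivic suspension construction should yield explicit polynomial formulae for maps $Q_4 \to Q_3$ and $Q_5 \to Q_3$ representing $\eta = \alpha_1$ and $\eta^2 = \alpha_1^2$. For odd primes $p$, the situation is more delicate because $\alpha_1 \in \pi_{2p}(S^3)$ is genuinely unstable; classically it arises from a secondary construction (e.g.\ a Toda bracket involving the mod-$p$ Moore space). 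I would construct a motivic analog by working with a motivic Moore space, realized as the cofiber of multiplication by $p$ on a suitable motivic sphere, and carrying out the analogous secondary composition inside $\hop{\cplx}$ to produce a motivic class $\alpha_1^{mot}$ whose complex realization is $\alpha_1$; its self-composition then delivers $(\alpha_1^{mot})^2$.

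The concluding step is to promote the resulting motivic homotopy classes in $[Q_m, Q_3]_{\aone}$ to honest morphisms of varieties. Since source and target are smooth affine and $Q_3 \simeq \SL_2$ already provides a scheme-level model of its motivic homotopy type, one can invoke Morel's representation theorem to realize an $\aone$-homotopy class as an actual morphism modulo $\aone$-homotopy; naturality of complex realization then places the resulting map of $\cplx$-manifolds in the correct classical homotopy class. I expect the principal obstacle to be the odd-primary case: one must arrange the motivic secondary construction carefully enough that it visibly lives in $[Q_{2p}, Q_3]_{\aone}$ rather than only in a motivic stable stem, and one must verify that topological realization is faithful enough on the relevant motivic homotopy group to detect $\alpha_1$ (e.g.\ via a motivic Steenrod operation). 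Unlike at $p = 2$, there is no fully explicit polynomial formula to fall back on, so the argument necessarily routes through the motivic machinery before descending to the quadric model.
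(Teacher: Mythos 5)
Your broad outline --- produce a motivic class whose realization is $\alpha_1$ or $\alpha_1^2$, invoke the affine representability theorem of \cite{AsokHoyoisWendtII} to promote it to an honest morphism of quadrics, and use naturality of complex realization to identify the resulting topological homotopy class --- does match the skeleton of the paper's argument. But there is a genuine gap at the center of your plan: you never confront the \emph{weight} (bidegree) mismatch, which is the main technical obstruction the paper is built to overcome. Any natural construction of $\alpha_1$ (yours via a Toda bracket, or the paper's via the splitting of $SL_p$) produces a class in $\bpi_{p-1,p+1}^{\aone}(S^{1,2})$, i.e.\ a map out of $S^{p-1,p+1}$; but $Q_{2p}\simeq S^{p,p}$ and $Q_{2p-1}\simeq S^{p-1,p}$, so $S^{p-1,p+1}$ is not the $\aone$-homotopy type of any smooth affine quadric, and no simplicial or Tate suspension can convert a map $S^{p-1,p+1}\to S^{1,2}$ into a map $S^{p,p}\to S^{1,2}$ (suspension preserves the difference of the two indices, while here one must trade a $\gm{}$-factor for a simplicial circle). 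This already bites at $p=2$: $\Sigma_{\gm{}}\eta$ is a map $S^{1,3}\to S^{1,2}$, not $S^{2,2}\to S^{1,2}$, so ``the Hopf map plus a motivic suspension construction'' cannot by itself yield a morphism $Q_4\to Q_3$. The paper's resolution is the weight-shifting operator $\tau$ of Theorem~\ref{thm:weightshifting}, built from the motivic mod~$n$ Moore space $M_n$ (the cofiber of $z\mapsto z^n$ on $\gm{}$): for an $n_\epsilon$-torsion class one obtains a shifted class ${}_{n_\epsilon}\bpi_{a-1,b+1}^{\aone}(\mathscr{X})\to\bpi_{a,b}^{\aone}(\mathscr{X})/[\tau]\cdot\bpi_{a,b+1}^{\aone}(\mathscr{X})$, together with realization statements (Lemmas~\ref{lem:complexrealizationoftau} and \ref{lem:etalerealizationoftau}) guaranteeing that applying $\tau$ does not change the topological realization. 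Without this device (and without verifying the torsion hypothesis that makes it applicable --- here supplied by $W(\cplx)\cong\Z/2$ at $p=2$ and by $\bpi_{p-1,p+1}^{\aone}(SL_p)\cong\Z/p!$ at odd $p$), your classes simply do not live in $[Q_{2p},Q_3]_{\aone}$ or $[Q_{4p-3},Q_3]_{\aone}$, and the representability theorem has nothing to represent.

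A secondary divergence: you propose building $\alpha_1$ at odd primes by a motivic secondary composition (Toda bracket) on the Moore space. The paper instead constructs $\alpha_1$ as the projection, under the $p$-local splitting $SL_p\simeq Q_3\times Q_5\times\cdots\times Q_{2p-1}$ of \cite[Theorem 5.2.1]{AsokFaselHopkinsNilpotence}, of a generator of $\bpi_{p-1,p+1}^{\aone}(SL_p)\cong\Z/p!\Z$, the generator being the Suslin clutching function of the torsor $SL_{p+1}\to Q_{2p+1}$; non-triviality and the identification with the topological $\alpha_1$ are then read off from Serre's splitting of $SU_p$ under complex (or \'etale) realization. The Moore space enters only as a weight-shifting gadget, not as the stage for a secondary operation. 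Your Toda-bracket route is not obviously wrong, but it would require substantial additional work (existence and non-vanishing of the motivic bracket, compatibility with realization) and would still leave you facing the same weight problem afterwards.
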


\begin{remintro}
On the way to establishing Theorems~\ref{thmintro:rees} and \ref{thmintro:wood} we answer some other questions posed by Wood (see Proposition~\ref{prop:tautatesuspensionofeta}) and construct a number of other non-constant morphisms between quadrics (see Proposition~\ref{prop:nu2sigma2}).
\end{remintro}

\subsubsection*{Outline of the method}
The proofs of both Theorems \ref{thmintro:rees} and \ref{thmintro:wood} rely on two basic ingredients.  First, we use an extension of $p$-local homotopy theory \cite{Sullivan, BousfieldKan} to the setting of the Morel-Voevodsky $\aone$-homotopy category developed in \cite{AsokFaselHopkinsNilpotence}.  In fact, we only need a few specific facts from that paper, most important of which is a $p$-local splitting result for the special linear group $\mathrm{SL}_n$ \cite[Theorem 5.2.1]{AsokFaselHopkinsNilpotence}.  Using this $p$-local splitting, we give a new construction of the class $\alpha_1^2$ mentioned above that makes sense algebro-geometrically.  Second, the algebro-geometric version of $\alpha_1^2$ has ``weights" that prevent it from being the $\aone$-homotopy class corresponding to an algebraic vector bundle.  To deal with this issue, we introduce and analyze an $\aone$-homotopy class $\tau$ that shifts weights, at least for classes that are ``torsion" in a suitable sense; the construction and properties of $\tau$ form the contents of Section~\ref{ss:moorespaces}, the main result of which is Theorem~\ref{thm:weightshifting}.  Section~\ref{ss:buildingvbandmaps} then reviews some results on $p$-local $\aone$-homotopy theory and constructs an $\aone$-homotopy class $\alpha_1$ using algebro-geometric techniques (see Proposition~\ref{prop:alpha1}).  Granted the construction of algebro-geometric homotopy classes realizing the topological classes $\alpha_1$ and $\alpha_1^2$, we may then appeal to general results of \cite{AsokHoyoisWendtII} to guarantee the existence of actual morphisms of quadrics representing given $\aone$-homotopy classes.

\subsubsection*{Conventions}
Fix a base field $k$.  For later use, we remind the reader that a field $k$ is called {\em formally real} if $-1$ {\em is not} a sum of squares in $k$, and not formally real otherwise.  Write $\Sm_k$ for the category of schemes that are separated, smooth and have finite type over $\Spec k$.  Write $\Spc_k$ for the category of simplicial presheaves on $\Sm_k$; a \emph{space} is simply an object of $\Spc_k$. We identify $\Sm_k$ as the full-subcategory of $\Spc_k$ consisting of simplicially constant representable presheaves.  We write $\ast$ for the final object of $\Spc_k$, and by a pointed space, we mean a pair $(\mathscr{X},x)$ consisting of a space $\mathscr{X}$ and a morphism $x: \ast \to \mathscr{X}$.  If $\mathscr{X} \in \Spc_k$, we write $\mathscr{X}_+ := \mathscr{X} \sqcup \ast$ and refer to $\mathscr{X}_+$ as $\mathscr{X}$ with a disjoint base-point.

We write $\ho{k}$ for the Morel--Voevodsky $\aone$-homotopy category \cite{MV}; if $\mathscr{X}$ and $\mathscr{Y}$ are spaces, then we set $[\mathscr{X},\mathscr{Y}]_{\aone} := \hom_{\ho{k}}(\mathscr{X},\mathscr{Y})$. A similar notation will be employed for the set of based $\aone$-homotopy classes of maps between pointed spaces.  We write $\gm{}$ for the usual multiplicative group scheme, which is a pointed space via the identity section.  For any integers $a,b\in \N$, we write $S^{a,b}$ for the motivic sphere $S^a\wedge \gm{\sma b}$ (note that this is not consistent with Voevodsky's notation). If $\mathscr X$ is a pointed space, we denote by $\piaone_{a,b}(\mathscr X)$ the Nisnevich sheaf (of sets, groups or abelian groups) associated with the presheaf on $\Sm_k$ assigning $[S^{a,b} \wedge U_+,\mathscr{X}]_{\aone}$ to $U \in \Sm_k$.

\subsubsection*{Acknowledgments}
The authors warmly thank the referees for their careful reading and useful suggestions that helped us to improve the paper.

\section{Proofs of the main results}
\label{s:proofs}
\subsection{Mod $n_{\epsilon}$ Moore spaces and weight shifting}
\label{ss:moorespaces}
Suppose $\mathscr{X}$ is a pointed space, and we are given a pointed map $\xi: S^{a,b} \to \mathscr{X}$ representing an element of $\pi_{a,b}^{\aone}(\mathscr{X})(k)$.  Our first goal in this section will be to describe a procedure that, under the assumption that $\xi$ is a torsion class in a suitable sense, allows us to produce a new morphism from $S^{a+1,b-1}$.  We introduce and study a motivic analog of the usual mod $n$ Moore space; the main difference between our story and the classical story is that ``multiplication by $n$" in $\aone$-homotopy sheaves is more subtle.  More precisely, under suitable hypotheses on $a$ and $b$, the $\aone$-homotopy sheaves of $\mathscr{X}$ inherit an action of Milnor--Witt K-theory sheaves.  After reviewing some properties of this action in the unstable setting, we then introduce appropriate analogs of the classical multiplication by $n$ map.

\subsubsection*{Composition and actions of Milnor--Witt sheaves}
We begin by reviewing results of F. Morel on the structure of endomorphisms of motivic spheres.  The computation is phrased in terms of the Milnor--Witt K-theory sheaves $\KMW_i$ described in \cite[Chapter 3]{MField}.   If $\mathbf{GW}$ is the unramified Grothendieck--Witt sheaf (i.e., the unramified sheaf whose sections over extensions $L$ of the base-field are the given by $\mathrm{GW}(L)$, the Grothendieck--Witt group of symmetric bilinear forms over $L$) and $\mathbf{W}$ is the unramified Witt sheaf (i.e., the unramified sheaf whose sections over $L$ are given by the Witt group $\mathrm{W}(L)$), then Morel showed \cite[Lemma 3.10]{MField} that $\KMW_0 \cong \mathbf{GW}$ and $\KMW_{i} \cong \mathbf{W}$ for $i < 0$.  Milnor--Witt K-theory sheaves form a sheaf of ${\mathbb Z}$-graded rings $\KMW_*$.

\begin{thm}[{\cite[Corollary 6.43]{MField}}]
\label{thm:morelcomputation}
Suppose $a$ and $b$ are integers with $a \geq 2$ and $b \geq 1$.  For any integer $i \geq -b$, there is an isomorphism
\[
\bpi_{a,b+i}^{\aone}(S^{a,b}) \isomto \KMW_{-i}.
\]
\end{thm}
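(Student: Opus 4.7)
The plan is to reduce the statement, via the $\aone$-connectivity and $\aone$-Freudenthal suspension theorems of Morel \cite{MField}, to his fundamental identification of the zeroth $\pone$-graded stable motivic stem with Milnor--Witt K-theory.

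First, I would invoke the $\aone$-connectivity theorem to see that $S^{a,b}$ is $\aone$-$(a-1)$-connected. With $a \geq 2$, the sheaf $\bpi_a^{\aone}(S^{a,b})$ is then strictly $\aone$-invariant, and so are all of its $\mathbf{G}_m$-contractions; the presheaf $U \mapsto [S^{a,b+i}\wedge U_+, S^{a,b}]_{\aone}$ is exactly the $(b+i)$-fold contraction of $\bpi_a^{\aone}(S^{a,b})$, which is meaningful because the hypothesis $i \geq -b$ guarantees $b+i \geq 0$. I would then apply the $\aone$-Freudenthal suspension theorem: since $a \leq 2(a-1)$ whenever $a \geq 2$, simplicial suspension induces an isomorphism
\[
\bpi_{a,b+i}^{\aone}(S^{a,b}) \isomto \bpi_{a+1,b+i}^{\aone}(S^{a+1,b}),
\]
and iterating produces compatible isomorphisms to all further simplicial suspensions. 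Passing to the colimit identifies the sheaf in question with the zeroth $\pone$-stable motivic stem in $\mathbf{G}_m$-weight $-i$:
\[
\bpi_{a,b+i}^{\aone}(S^{a,b}) \;\cong\; \bpi_{0,-i}^{s,\aone}(\So),
\]
since only the difference $b-(b+i) = -i$ of target and source $\mathbf{G}_m$-weights survives stabilization.

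Finally, I would cite Morel's central identification $\bpi_{0,n}^{s,\aone}(\So) \cong \K^{MW}_n$ for every integer $n$, and specialize at $n = -i$ to conclude. This last step is precisely the main obstacle, and I would treat it as a quoted black box. Its proof in \cite{MField} constructs explicit symbols $[u] \in \bpi_{1,1}^{\aone}(S^{1,1})$ attached to units $u$, combines them with the motivic Hopf map $\eta$ to produce a graded ring homomorphism $\K^{MW}_* \to \bpi_{0,*}^{s,\aone}(\So)$, verifies the Steinberg relation $[u]\cdot[1-u]=0$ and the $\eta$-relations, and upgrades the resulting surjection to an isomorphism using Morel's structure theorem for strictly $\aone$-invariant sheaves over a perfect base together with a residue/transfer argument at the level of sections over finitely generated field extensions of $k$.
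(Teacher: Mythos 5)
The paper does not actually prove this statement: it is imported verbatim from Morel as \cite[Corollary 6.43]{MField}, so there is no in-paper argument to compare yours against, and the only question is whether your reconstruction is sound. The first half of it is: the $\aone$-connectivity theorem, the identification of $\bpi_{a,b+i}^{\aone}(S^{a,b})$ with the $(b+i)$-fold contraction of the strictly $\aone$-invariant sheaf $\bpi_{a}^{\aone}(S^{a,b})$, and the simplicial Freudenthal argument in the range $a \leq 2a-2$ are all correct and correctly sourced. Note, however, that this is not Morel's route: his argument is unstable throughout, running the $\aone$-Hurewicz theorem to reduce to the computation of $\mathbf{H}_a^{\aone}(S^{a,b})$ as the free strictly $\aone$-invariant sheaf on $\gm{\sma b}$, which is then identified with $\K^{MW}_b$ via the structure theory of contracted sheaves; the bigraded statement follows by contracting.

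The genuine gap is in the step ``passing to the colimit identifies the sheaf in question with the zeroth $\pone$-stable motivic stem.'' The Freudenthal theorem you invoke controls only simplicial suspensions, so the colimit you form computes an $S^1$-stable homotopy sheaf of $\Sigma^{\infty}_{S^1}\gm{\sma b}$, not a $\pone$-stable stem. To conclude that ``only the difference $-i$ of weights survives'' you also need the $\gm{}$-suspension maps $\bpi_{a,b+i}^{\aone}(S^{a,b}) \to \bpi_{a,b+i+1}^{\aone}(S^{a,b+1})$ to be isomorphisms, and that does not follow from connectivity: $\gm{}$-suspension does not raise simplicial connectivity and there is no formal Freudenthal theorem in the $\gm{}$-direction. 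This $\gm{}$-stability is itself part of the content of Morel's theorem (it is where the symbol calculus and the analysis of contractions of strictly $\aone$-invariant sheaves enter), so quoting the $\pone$-stable computation $\bpi_{0,*}^{s}(\So)\cong\K^{MW}_{*}$ as a black box is also uncomfortably close to circular, since that stable theorem is deduced from (or proven by the same argument as) the unstable Corollary 6.43. Your reduction can be repaired by additionally citing Morel's unstable $\gm{}$-stability results, but as written the chain of implications runs in the wrong direction.
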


We now discuss how Theorem~\ref{thm:morelcomputation} interacts with the graded ring structure on Milnor--Witt K-theory.  Suppose $(\mathscr{X},x)$ is a pointed space and we are given a morphism $\varphi: S^{a,b+i} \to S^{a,b}$ where $i \geq -b$.  In that case, precomposition with $\varphi$ defines a morphism of sheaves
\[
\varphi^*: \bpi_{a,b}^{\aone}(\mathscr{X}) \longrightarrow \bpi_{a,b+i}^{\aone}(\mathscr{X}).
\]
More generally, for $i \geq -b$, define a morphism:
\[
[S^{a,b+i} \sma U_+,S^{a,b}]_{\aone} \times [S^{a,b} \sma U_+, \mathscr{X}]_{\aone} \longrightarrow [S^{a,b+i} \sma U_+,\mathscr{X}]_{\aone}
\]
by means of the following formula: if $U \in \Sm_k$, and $(\varphi,f)$ is a pair with $\varphi: S^{a,b+i} \sma U_+ \to S^{a,b}$ and $f: S^{a,b} \sma U_+ \to \mathscr{X}$, then we send $(\varphi,f)$ to the composite:
\[
S^{a,b+i} \sma U_+ \stackrel{\mathrm{id} \sma \Delta_{U_+}}{\longrightarrow} S^{a,b+i} \sma U_+ \sma U_{+} \stackrel{\varphi \sma \mathrm{id}_{U_+}}{\longrightarrow} S^{a,b} \sma U_+ \stackrel{f}{\longrightarrow} \mathscr{X}.
\]
The preceding formula yields a morphism of presheaves and sheafifying gives rise to a morphism:
\[
\bpi_{a,b+i}^{\aone}(S^{a,b}) \times \bpi_{a,b}^{\aone}(\mathscr{X}) \longrightarrow \bpi_{a,b+i}^{\aone}(\mathscr{X}).
\]
If $a \geq 2$, or $a \geq 1$ and $\mathscr{X}$ is an $\aone$-$h$-space, then $\bpi_{a,b}^{\aone}(\mathscr{X})$ is a sheaf of abelian groups.
Standard results on composition operations in homotopy groups (see, e.g., \cite[Chapter X.8.I]{Whitehead}) show that the above morphism yields an action of the sheaf $\bpi_{a,b+i}^{\aone}(S^{a,b})$ on $\bpi_{a,b}^{\aone}(\mathscr{X})$.

In conjunction with Theorem~\ref{thm:morelcomputation}, whenever $a \geq 2$ and $b \geq 1$, for any integer $i \geq -b$, the discussion above provides an action morphism
\begin{equation}
\label{eqn:action}
\KMW_{-i} \times \bpi_{a,b}^{\aone}(\mathscr{X}) \longrightarrow \bpi_{a,b+i}^{\aone}(\mathscr{X}).
\end{equation}
Note that this action is covariantly functorial in the pointed space $\mathscr{X}$.  In the special case $i = 0$, we conclude that there is a (functorial) $\mathbf{GW} = \KMW_0$-module structure on $\bpi_{a,b}^{\aone}(\mathscr{X})$.  We summarize these facts as follows.


\begin{defn}
\label{defn:actionofKM}
Assume $a \geq 2$, $b \geq 1$ and $i \geq -b$ are integers and suppose $\varphi: S^{a,b+i} \to S^{a,b}$ is a morphism.  Write $[\varphi]$ for the class in $\KMW_{-i}(k)$ corresponding to $\varphi$.  If $(\mathscr{X},x)$ is a pointed space, then the homomorphism $\varphi^*: \bpi_{a,b}^{\aone}(\mathscr{X}) \to \bpi_{a,b+i}^{\aone}(\mathscr{X})$ corresponds to the action of $[\varphi]$ under the morphism in \eqref{eqn:action}.
\end{defn}

\begin{ex}
\label{ex:ringstructure}
Assume $b,b' > 0$ are integers, $a \geq 2$ and take $\mathscr{X} = S^{a,b'}$,  In that case, $\piaone_{a,b}(S^{a,b'}) = \KMW_{b'-b}$ and we obtain a morphism $\KMW_{-i} \times \KMW_{b'-b} \to \KMW_{b'-b - i}$ for $i \geq -b$.  By construction, this operation is given by the multiplication operation in Milnor--Witt K-theory.
\end{ex}

\subsubsection*{Degree $n$ self-maps of the Tate circle}
Now, we introduce the analog of the self-map of degree $n$ of the sphere that we will use in the sequel; we begin by analyzing self-maps of the Tate circle.

\begin{lem}
If $z$ is a coordinate on $\gm{}$, then the map sending the integer $n$ to the $\aone$-homotopy class of the base-point preserving morphism $z \mapsto z^n$ defines a bijection $\Z \isomt [\gm{},\gm{}]_{\aone}$ where the latter is the set of pointed $\aone$-homotopy endomorphisms of $\gm{}$.
\end{lem}

\begin{proof}
As a presheaf, $\gm{}$ is fibrant in the injective Nisnevich local model structure on simplicial presheaves and it is $\aone$-local since for any smooth scheme $U$ the map $\hom_{\Sm_k}(U,\gm{}) \to \hom_{\Sm_k}(U \times \aone,\gm{})$ is a bijection.  It follows that the pointed $\aone$-homotopy endomorphisms of $\gm{}$ are identified with the pointed endomorphisms of $\gm{}$ as a scheme, i.e., 
\[
[\gm{},\gm{}]_{\aone} = \hom_{\Sm_k,\bullet}(\gm{},\gm{}).
\]
In that case, there is a canonical bijection $\Z\isomt\gm{}(\gm{})$ mapping an integer $n$ to the endomorphism of $\gm{}$ sending the coordinate $z$ to $z^n$. 
\end{proof}

We now define analogs of mod $n$ Moore spaces following the usual procedure.

\begin{defn}\label{rem:modelformn}
\label{defn:Mn}
Set $e_n\colon \gm{} \to \gm{}$ to be the morphism defined by $z \mapsto z^n$.  Then, set
\[
M_n := \operatorname{hocolim}(\xymatrix{ \ast & \ar[l] \gm{} \ar[r]^{e_n} & \gm{}});
\]
here $\operatorname{hocolim}$ means the homotopy colimit computed in the $\aone$-homotopy category. Explicitly, $M_n$ may be realized as the ordinary pushout of the diagram $\aone \leftarrow \gm{} \rightarrow \gm{}$ where the leftward map is the standard inclusion (which is a cofibration).
\end{defn}


Next, we introduce an explicit homotopy class of loops on $M_n$.  If $\bmu_n$ is the sheaf of $n$-th roots of unity (pointed by the unit section), then there is a fiber sequence of the form $\bmu_n\to \gm{} \to \gm{}$, where the second map is $e_n$.  The induced map between homotopy cofibers of the horizontal maps in the diagram
\[
\xymatrix{
\bmu_n \ar[r]\ar[d] & \ast \ar[d] \\
\gm{} \ar[r]_{e_n} & \gm{}
}
\]
is a morphism $\Sigma \bmu_n \to M_n$.  A choice of section of $\bmu_n$, i.e., a choice of an $n$-th root of unity, may be viewed as a morphism $S^0 \to \bmu_n$.  Composing the simplicial suspension of such a choice with the morphism just described, we obtain a morphism $S^1 \to \Sigma \bmu_n \to M_n$. If $n$ is invertible in $k$, and $k$ contains a {\em primitive} $n$-th root of unity $\tau \in \mu_n(k)$, we abuse notation and write
\begin{equation}
\label{eqn:tau}
\tau: S^1 \longrightarrow M_n
\end{equation}
for the induced composite.  We write $C(\tau)$ for the homotopy cofiber of the above map so there is a cofiber sequence of the form
\[
S^1 \longrightarrow M_n \longrightarrow C(\tau);
\]
we proceed to analyze this cofiber sequence, its suspensions and its dependence on $\tau \in \bmu_n(k)$.

\begin{construction}
\label{construction:explicittau}
For later use, we would like to have an explicit morphism of spaces representing $\tau$.  In Definition~\ref{rem:modelformn}, we provided an explicit model for $M_n$ as a pushout.  Likewise, identify $S^0_k = \Spec k[t]/t(1-t)$ and consider the morphism $S^0_k \to \aone_k$ given by the projection $k[t] \to k[t]/t(1-t)$.  With these identifications, the $\aone$-homotopy type of $S^1$ is represented by the actual pushout of the diagram
\[
\ast \longleftarrow S^0_k \longrightarrow \aone_k;
\]
we write $S^1_k = \aone/\{0,1\}$ for this model of $S^1$.  Our choice of primitive $n$-th root of unity defines a morphism $S^0_k \to \gm{}$ (send the base-point to $1 \in \gm{}(k)$ and the non-base-point to $\tau \in \gm{}(k)$).  If we consider the inclusion $\gm{} \hookrightarrow \aone_k$ as given by the ring homomorphism $k[z] \subset k[z,z^{-1}]$, then we may define a map $\aone_k\to \aone_k$ by means of the ring homomorphism $k[z] \to k[t]$ sending $z$ to $(1-t) + t \tau$.  By definition,  these morphisms fit into a commutative diagram of the form:
\[
\xymatrix{
\ast \ar[d]& \ar[l] S^0_k \ar[r] \ar[d] & \aone_k \ar[d]\\
\gm{} & \ar[l]^{e_n} \gm{} \ar[r] & \aone_k.
}
\]
The induced morphism of pushouts $S^1_k \to M_n$ is a model for the morphism in \eqref{eqn:tau}.
\end{construction}

\begin{rem}
\label{rem:compatibility}
Above, we have constructed $\tau$ for a single $n$, but there are compatibilities for the relevant morphisms for different choices of $n$.  Indeed, if $n | n'$ then one can write $n'=an$ and obtain induced morphisms $\bmu_{n'} \xrightarrow{e_a} \bmu_{n}$ and $M_{n'} \to M_{n}$ fitting in a commutative diagram
\[
\xymatrix{\Sigma\mu_{n'}\ar[r]\ar[d]_-{\Sigma(e_a)} & M_{n'}\ar[d] \\
\Sigma\mu_{n}\ar[r] & M_n
}
\] 
If we fix a primitive $n'$-th root of unity, then it yields a primitive $n$-th root of unity.  A morphism $S^1 \to M_{n'}$ thus determines a morphism $S^1 \to M_{n}$ as well.
\end{rem}

\subsubsection*{Suspensions of $e_n$}
Suspending $e_n: \gm{} \to \gm{}$, we obtain morphisms $S^{a,b} \to S^{a,b}$ for any $a \geq 0$ and $b \geq 1$.  As soon as $a \geq 1$, the group $[S^{a,b},S^{a,b}]_{\aone}$ is no longer $\Z$.  Indeed, Morel showed  \cite[Theorem 7.36]{MField} that $[S^{1,1},S^{1,1}]_{\aone}$ is an extension of $\mathbf{GW}(k)$ by $k^{\times}/({\pm 1})$. If $a \geq 2$ and $b\geq 1$, then Theorem~\ref{thm:morelcomputation} shows $[S^{a,b},S^{a,b}]_{\aone} = \mathbf{GW}(k)$.  Thus, for $a \geq 2$ and $b\geq 1$ suspension defines a map $\Z = [\gm{},\gm{}]_{\aone} \to [S^{a,b},S^{a,b}]_{\aone} = \mathbf{GW}(k)$.

Following F. Morel, we set for any integer $n\in\Z$
\[
n_{\epsilon} := \begin{cases}\sum_{i=1}^n \langle (-1)^{(i-1)}\rangle &  \text{ if $n>0$,} \\
0 & \text{ if $n=0$,} \\
-\langle -1\rangle \left(\sum_{i=1}^{-n} \langle (-1)^{(i-1)}\rangle\right) & \text{ if $n<0$.}
\end{cases}
\]

\begin{rem}\label{rem:nepsilon}
As $n_{\epsilon}m_{\epsilon}=(nm)_{\epsilon}$ for any $m,n\in\Z$, the map $\Z \to \mathrm{GW}(k)$ given by $n\mapsto n_{\epsilon}$ is a homomorphism for the monoid structure on each object induced by composition. It is not, in general, a group homomorphism for the additive structure on $\mathrm{GW}(k)$.  However, if $-1$ is a square in $k$, then $\langle -1 \rangle = 1$  yielding $n_{\epsilon}=n$ for any $n\in\Z$. In that case, the map $\Z \to \mathrm{GW}(k)$ is a group homomorphism.
\end{rem}

We now identify the image of $e_n$ under suspension; the next result is well-known, but is unfortunately not stated in a usable form in the literature.

\begin{prop}
\label{prop:suspensionsofn}
For any integers $a \geq 2$ and $b\geq 1$, the $(a,b)$-fold suspension $\Sigma^{a,b}e_n$ of $e_n:\gm{}\to \gm{}$ coincides with $n_\epsilon$.
\end{prop}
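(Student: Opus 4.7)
The plan is to proceed by induction on $n$, exploiting the group structure on $\gm{}$ and the resulting cogroup structure on its suspension. By functoriality of suspension, $\Sigma^{i,j}=\Sigma^{i-1,j}\circ\Sigma^{1,0}$, etc., so it suffices to verify the formula for a single choice of $(i,j)$ with $i,j\geq 1$; I will work with $(i,j)=(1,1)$, where Theorem~\ref{thm:morelcomputation} identifies $[S^{2,2},S^{2,2}]_{\aone}$ with $\mathbf{GW}(k)$ so that $n_\epsilon$ has its evident meaning. The base cases $n=0,1$ are immediate: $[0]$ is constant, so $\Sigma^{1,1}[0]=0=0_\epsilon$, while $[1]=\mathrm{id}$, so $\Sigma^{1,1}[1]=\langle 1\rangle=1_\epsilon$.

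For the inductive step, factor $[n+1]:\gm{}\to\gm{}$ as $\mu\circ([n]\times[1])\circ\Delta$, where $\mu$ is the group multiplication and $\Delta$ is the diagonal. Using the James-type splitting
\[
\Sigma(\gm{}\times\gm{})\;\simeq\;\Sigma\gm{}\,\vee\,\Sigma\gm{}\,\vee\,\Sigma(\gm{}\wedge\gm{}),
\]
I would decompose $\Sigma\mu$ according to the wedge summands: on each $\Sigma\gm{}$-summand (corresponding to the two unit sections) it is the identity, while on $\Sigma(\gm{}\wedge\gm{})$ it represents a class in a Milnor--Witt sheaf that, in the $\mathbf{GW}$-range afforded by Theorem~\ref{thm:morelcomputation}, assembles with the $\epsilon$-twist coming from the coordinate swap. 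Using that precomposition with $\Sigma\Delta$ recovers the cogroup addition on $\Sigma\gm{}$, this yields the recursion
\[
\Sigma^{1,1}[n+1]\;=\;\Sigma^{1,1}[n]\;+\;\langle(-1)^{n}\rangle\cdot\Sigma^{1,1}[1],
\]
which matches the identity $(n+1)_\epsilon=n_\epsilon+\langle(-1)^{n}\rangle$ and closes the induction.

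The main obstacle is the $\epsilon$-bookkeeping: tracking the signs introduced by the interchange of smash factors of $\gm{}$ (the swap of which represents $\epsilon=-\langle-1\rangle$ in $\mathbf{GW}(k)$) is precisely what converts the naive integer degree into $n_\epsilon$. A cleaner path, if permissible, is to cite Morel's stable degree calculation in \cite[Chapter~6]{MField}, where the $\mathbf{GW}(k)$-valued degree of $z\mapsto z^n$ is explicitly identified with $n_\epsilon$; the proposition is then a repackaging of that calculation via naturality of suspensions and the isomorphisms of Theorem~\ref{thm:morelcomputation}.
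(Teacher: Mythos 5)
Your strategy---induction on $n$ using the $h$-space structure of $\gm{}$ and the splitting $\Sigma(\gm{}\times\gm{})\simeq \Sigma\gm{}\vee\Sigma\gm{}\vee\Sigma(\gm{}\wedge\gm{})$---is genuinely different from the paper's proof, which is not inductive: there, functoriality of the join $\gm{}\ast\gm{}\simeq{\mathbb A}^2\setminus 0$ exhibits $\Sigma_{\pone}[n]$ as the explicit self-map $(x,y)\mapsto(x,y^n)$ of ${\mathbb A}^2\setminus 0$, whose class is $n_\epsilon$ by Morel's symbol identity $[a^n]=n_\epsilon[a]$ from \cite[Lemma 3.14]{MField}. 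The difficulty with your argument as written is that the inductive step is exactly the part you do not carry out. The splitting gives
\[
\Sigma\bigl(\mu\circ([n]\times[1])\circ\Delta\bigr)\;=\;\Sigma[n]+\Sigma[1]+c_n,\qquad c_n=H(\mu)\circ\Sigma([n]\wedge\mathrm{id})\circ\Sigma\overline{\Delta},
\]
where $\overline{\Delta}\colon\gm{}\to\gm{}\wedge\gm{}$ is the reduced diagonal and $H(\mu)$ is the Hopf construction on the multiplication, i.e.\ $\eta$. Closing your recursion requires the identification $c_n=\langle(-1)^n\rangle-\langle 1\rangle=\eta[(-1)^n]=n_\epsilon\,\eta[-1]$, which needs (i) the class of $\overline{\Delta}$, equivalently the relation $[u][u]=[u][-1]$ in $\K^{MW}_*$, (ii) the class of $\Sigma([n]\wedge\mathrm{id})$---itself an instance of the proposition, so the induction must be organized to avoid circularity---and (iii) centrality of $\eta$. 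None of this appears in your write-up; it is precisely the ``$\epsilon$-bookkeeping'' you defer, and it is the entire content of the statement. Two smaller points: $\Sigma^{1,1}\gm{}=S^{1,2}$, not $S^{2,2}$, and Theorem~\ref{thm:morelcomputation} as stated requires $a\geq 2$, so identifying $[S^{1,2},S^{1,2}]_{\aone}$ with $\mathbf{GW}(k)$, and the bijectivity of further suspension needed to propagate the $(1,1)$ case to all $(i,j)$, require a separate remark, as in the last paragraph of the paper's proof. Your fallback of simply citing Morel's $\mathbf{GW}(k)$-valued degree of $z\mapsto z^n$ is legitimate and is in substance what the paper does, but then the inductive machinery is doing no work.
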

\begin{proof}
An earlier version of this paper had a different proof making explicit reference to computations in higher Grothendieck--Witt theory.  We adopt a proof here, internal to Morel's formulations, kindly supplied by a referee for the paper.  Write $e \in [S^{a,b},S^{a,b}]_{\aone}$ for the image of an appropriate suspension of $e_n$; we aim to show that $e = n_{\epsilon}$ under the hypotheses on $a$ and $b$.  Without loss of generality, we may assume that $a = 2, b = 1$.  Now, there is an action of $[S^{2,1},S^{2,1}]_{\aone} = \KMW_0(k)$ on $[S^{2,0},S^{2,1}]_{\aone} = \KMW_1(k)$ by post-composition.  For any $a \in k^{\times}$, with respect to this action $e \cdot [a] = e \circ [a] = [a^n]$.  By \cite[Lemma 3.14]{MField}, we know that $[a^n] = n_{\epsilon} a$.  

To conclude, it suffices to find $a \in k^{\times}$ such that $[a]$ is a non-torsion element with respect to the $\KMW_0(k)$-action.  If such an element exists, then we are done.  If not, consider the purely transcendental extension $k(t)$: the element $[t]$ is a non-torsion element for the $\KMW_0(k)$-action (there is a residue homomorphism $\KMW_1(k(t)) \to \KMW_0(k)$ sending $[t]$ to $1$). \end{proof}

\subsubsection*{Analyzing a connecting homomorphism}
Henceforth, fix an integer $n$ and assume our base-field $k$ contains a primitive $n$-th root of unity $\tau$ (this implies in particular that $n$ is prime to $\mathrm{char}(k)$).  Then, smashing the morphism $\tau$ of \eqref{eqn:tau}
with $S^{a-1,b}$ yields a morphism $\Sigma^{a-1,b}\tau\colon S^{a,b} \to S^{a-1,b} \wedge M_n$, while 
smashing the cofiber sequence defining $M_n$ (see Definition~\ref{defn:Mn}) with $S^{a-1,b}$, one obtains a cofiber sequence of the form
\begin{equation}\label{eqn:shiftedmn}
S^{a-1,b+1} \xrightarrow{\Sigma^{a-1,b}e_n} S^{a-1,b+1}\longrightarrow S^{a-1,b} \wedge M_n.
\end{equation}

The composite of $\Sigma^{a-1,b}\tau$ with the connecting morphism $S^{a-1,b} \wedge M_n\to S^{a,b+1}$ of the above cofiber sequence yields a morphism
\begin{equation}\label{eqn:connectinghomom}
S^{a,b} \longrightarrow S^{a,b+1}.
\end{equation}
If $a \geq 2$ and $b \geq 0$, then Theorem~\ref{thm:morelcomputation} tells us that $\bpi_{a,b}^{\aone}(S^{a,b+1}) = \KMW_1(k)$, so the preceding morphism defines an element of $\KMW_1(k)$. Now, the element $\tau \in \bmu_n(k) \subset k^{\times}$ yields a symbol $[\tau] \in \KMW_1(k)$ and the next result identifies the $\aone$-homotopy class of \eqref{eqn:connectinghomom} in terms of this symbol.

\begin{prop}
\label{prop:connectinghomomissymboltau}
Suppose $n$ is a fixed integer $\geq 2$, $k$ is a field containing a primitive $n$-th root of unity and $a$ and $b$ are integers with $a \geq 2$ and $b \geq 0$.  The class in $\KMW_1(k)$ corresponding to the morphism $S^{a,b} \to S^{a,b+1}$ of \eqref{eqn:connectinghomom} is the symbol $[\tau]$.
\end{prop}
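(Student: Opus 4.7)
The plan is to unwind the construction of the composite \eqref{eqn:connectinghomom} through the explicit model of $\tau$ from Construction~\ref{construction:explicittau} (or equivalently via the functoriality of the Puppe sequence) and then identify the resulting class with the symbol $[\tau]$ using Theorem~\ref{thm:morelcomputation}. Concretely, I would first observe that $\tau: S^1 \to M_n$ factors as $\Sigma S^0 \xrightarrow{\Sigma j} \Sigma \bmu_n \xrightarrow{\phi} M_n$, where $j: S^0 \to \bmu_n$ is the pointed morphism picking out our chosen primitive $n$-th root of unity and $\phi$ is the map between vertical cofibers of the square $(\bmu_n \to \ast) \Rightarrow (\gm{} \xrightarrow{[n]} \gm{})$.

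Next, I would appeal to naturality of the Puppe sequence applied to that same square: the Puppe connecting map $M_n \to \Sigma \gm{}$ from \eqref{eqn:shiftedmn}, when precomposed with $\phi$, is forced by the map of cofiber sequences to coincide with the simplicial suspension of the inclusion $\bmu_n \hookrightarrow \gm{}$. Composing with $\Sigma j$ shows that the unsmashed composite $S^1 \xrightarrow{\tau} M_n \to \Sigma \gm{} = S^{1,1}$ is exactly $\Sigma$ of the pointed morphism $S^0 \to \gm{}$ corresponding to the element $\tau \in k^{\times}$. Smashing with $S^{a-1,b}$ and invoking functoriality, the morphism \eqref{eqn:connectinghomom} is therefore the $(a,b)$-fold suspension of the pointed morphism $S^0 \to \gm{}$ that picks out $\tau$.

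Finally, I would translate this across Morel's isomorphism $\bpi_{a,b}^{\aone}(S^{a,b+1})(k) \cong \K^{MW}_1(k)$ (the case $i = -1$ of Theorem~\ref{thm:morelcomputation}, applied to the sphere $S^{a,b+1}$). Under that identification, by construction of the symbol calculus on $\K^{MW}_*$ (e.g.\ via Example~\ref{ex:ringstructure} and the multiplicative structure, or directly from Morel's identification on Milnor--Witt generators), the $(a,b)$-fold suspension of the pointed map $S^0 \to \gm{}$ corresponding to $u \in k^{\times}$ is exactly the symbol $[u]$. Applying this with $u = \tau$ completes the proof.

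The main obstacle I expect is the final identification: verifying cleanly that Morel's isomorphism sends the suspended class of the pointed morphism $S^0 \to \gm{}$ attached to $u \in k^{\times}$ to the symbol $[u] \in \K^{MW}_1(k)$. Although this is essentially baked into the construction of the isomorphism in \cite{MField}, keeping track of sign and normalization conventions requires some care; if necessary, Lemma~\ref{lem:actionofKMW} together with Example~\ref{ex:ringstructure} provides a convenient bridge via the $\mathbf{GW}$-action, since the result then reduces to the identification of $[\tau]$ as a generator of $\K^{MW}_1(k)$ in Morel's presentation.
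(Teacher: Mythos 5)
Your proposal is correct and follows essentially the same route as the paper: both reduce the composite to the simplicial suspension of the pointed map $S^0 \to \gm{}$ selecting $\tau$, via the compatibility of the fiber sequence $\bmu_n \to \gm{} \xrightarrow{[n]} \gm{}$ with the cofiber/Puppe sequence defining $M_n$, and then invoke Morel's identification $[S^{a,b},S^{a,b+1}]_{\aone} \cong \K^{MW}_1(k)$ to read off the symbol $[\tau]$. The paper handles your flagged ``main obstacle'' by directly citing \cite[Corollary 6.43]{MField} for the identification of the suspended unit class with the symbol, exactly as you suggest.
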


\begin{proof}
Tracing through the definitions, we see that the above morphism is induced by the $S^{a,b-1}$-suspension of the composite
\[
S^1 \longrightarrow S^1\wedge \bmu_n \longrightarrow  M_n \longrightarrow S^1\wedge \gm{}
\]
where the first map is the choice of $\tau$, the second map is the canonical map from the suspension of the fiber to the cofiber and the third map is the map from the cofiber to the suspension of the first space.  By basic compatibility between fiber and cofiber sequences, the composite of the last two maps coincides with the suspension of the inclusion $\bmu_n \to \gm{}$ (see, e.g., \cite[Chapter 8.7 Lemma p. 63]{May}).  Therefore, the map in question is the simplicial suspension of the map sending $\tau \in \bmu_n(k)$ to $\tau \in \gm{}(k)$.  Under the identification of $[S^{a+1,b-1},S^{a+1,b}]_{\aone} \cong \KMW_1(k)$ of \cite[Corollary 6.43]{MField}, this corresponds precisely to $[\tau] \in \KMW_1(k)$.
\end{proof}

\subsubsection*{Weight shifting}
We now show that, for classes that are torsion in a suitable sense, one may use the setup above to modify weights of unstable $\aone$-homotopy classes.  In fact, we will establish a more refined sheaf-theoretic weight-shifting device.  Given a pointed space $\mathscr{X}$, we write $\bpi_{a-1,b}^{\aone}(\mathscr{X};n_{\epsilon})$ for the Nisnevich sheaf on $\Sm_k$ associated with the presheaf $U \mapsto [S^{a-1,b} \sma M_n \sma U_+, \mathscr{X}]_{\aone}$; the notation is chosen to follow that for homotopy groups with coefficients.  If $a \geq 2$, $\bpi_{a-1,b}^{\aone}(\mathscr{X};n_{\epsilon})$ is a sheaf of groups, and if $a \geq 3$, this sheaf of groups is abelian.

Smashing the cofiber sequence in \eqref{eqn:shiftedmn} with $U_+$, mapping into $\mathscr{X}$ and sheafifying for the Nisnevich topology yield an exact sequence of $\aone$-homotopy sheaves of the form
\[
\cdots \longrightarrow \bpi_{a,b+1}^{\aone}(\mathscr{X}) \stackrel{(1)}{\longrightarrow} \bpi_{a,b+1}^{\aone}(\mathscr{X}) \longrightarrow \bpi_{a-1,b}^{\aone}(\mathscr{X};n_{\epsilon}) \longrightarrow \bpi_{a-1,b+1}^{\aone}(\mathscr{X}) \stackrel{(2)}{\longrightarrow} \bpi_{a-1,b+1}^{\aone}(\mathscr{X}) \longrightarrow \cdots.
\]
The arrows labelled $(1)$ and $(2)$ in this diagram are, by definition, given by $\Sigma^{a,b+1}e_n$ and $\Sigma^{a-1,b+1}e_n$ respectively. In case $a\geq 2$ and $b\geq 1$ (resp. $a\geq 3$ and $b\geq 1$), the discussion around Definition~\ref{defn:actionofKMW} (with $i=0$) tells us that these homomorphisms are given by multiplication by the classes in $\KMW_0(k)$ corresponding to these elements.  Proposition~\ref{prop:suspensionsofn} then guarantees that the endomorphism of $\bpi_{a,b+1}^{\aone}(\mathscr{X})$ (and $\bpi_{a-1,b+1}^{\aone}(\mathscr{X})$) in the above diagram corresponds to multiplication by $n_{\epsilon}$ for the $\mathbf{GW}$-module structures on these sheaves.  Write ${}_{n_{\epsilon}}\bpi_{a-1,b+1}^{\aone}(\mathscr{X})$ for the $n_{\epsilon}$-torsion subsheaf of $\bpi_{a-1,b+1}^{\aone}(\mathscr{X})$.

On the other hand, the morphism $\Sigma^{a-1,b}\tau$ yields a morphism of sheaves of the form 
\[
\bpi_{a-1,b}^{\aone}(\mathscr{X};n_{\epsilon}) \longrightarrow \bpi_{a,b}^{\aone}(\mathscr{X}).
\]  
Putting this together with the exact sequence of the previous paragraph, we obtain in case $a\geq 3$ and $b\geq 1$ a diagram of the form:
\begin{equation}
\label{diagram:weightshifting}
\xymatrix{
\bpi_{a,b+1}^{\aone}(\mathscr{X}) \ar[r]& \bpi_{a-1,b}^{\aone}(\mathscr{X};n_{\epsilon}) \ar[r]\ar[d]& {}_{n_{\epsilon}}\bpi_{a-1,b+1}^{\aone}(\mathscr{X}) \ar[r]& 0 \\
& \bpi_{a,b}^{\aone}(\mathscr{X}). & &
}
\end{equation}
We finally obtain our claimed weight-shifting result.

\begin{thm}
\label{thm:weightshifting}
Assume $k$ is a field containing a primitive $n$-th root of unity $\tau$ and let $a,b$ be integers with $a \geq 3$ and $b \geq 1$.  The morphism $\bpi_{a-1,b}^{\aone}(\mathscr{X};n_{\epsilon}) \to \bpi_{a,b}^{\aone}(\mathscr{X})$ of \textup{Diagram~\eqref{diagram:weightshifting}} induces a morphism of sheaves
\begin{equation}
\label{eqn:weightshifting}
\tau\colon {}_{n_{\epsilon}} \bpi_{a-1,b+1}^{\aone}(\mathscr{X}) \longrightarrow \bpi_{a,b}^{\aone}(\mathscr{X})/[\tau] \cdot \bpi_{a,b+1}^{\aone}(\mathscr{X}),
\end{equation}
where $[\tau] \in \KMW_1(k)$ is the symbol corresponding to the primitive $n$-th root of unity $\tau$ and $[\tau] \cdot$ denotes the action of $[\tau]$ under \eqref{eqn:action}.  If $\sqrt{-1}\in k$, then the morphism $\tau$ above induces a homomorphism
\[
\tau \colon {}_n \bpi_{a-1,b+1}^{\aone}(\mathscr{X})(k) \longrightarrow \bpi_{a,b}^{\aone}(\mathscr{X})(k)/[\tau] \cdot \bpi_{a,b+1}^{\aone}(\mathscr{X})(k).
\]
\end{thm}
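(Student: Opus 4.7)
The plan is to verify that the vertical map $v \colon \bpi_{a-1,b}^{\aone}(\mathscr{X};n_{\epsilon}) \to \bpi_{a,b}^{\aone}(\mathscr{X})$ of Diagram~\eqref{diagram:weightshifting}, followed by projection to $\bpi_{a,b}^{\aone}(\mathscr{X})/[\tau]\cdot\bpi_{a,b+1}^{\aone}(\mathscr{X})$, vanishes on the image of the preceding horizontal map $\partial \colon \bpi_{a,b+1}^{\aone}(\mathscr{X}) \to \bpi_{a-1,b}^{\aone}(\mathscr{X};n_{\epsilon})$. Exactness of the top row of Diagram~\eqref{diagram:weightshifting}---which identifies the cokernel of $\partial$ with ${}_{n_{\epsilon}}\bpi_{a-1,b+1}^{\aone}(\mathscr{X})$---will then produce the factored sheaf morphism $\tau$.

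To carry out the vanishing, I would unpack $v\circ\partial$ at the level of spaces. By construction, $\partial$ is induced by precomposition with the map $S^{a-1,b}\wedge M_n \to S^{a,b+1}$ from~\eqref{eqn:shiftedmn}, while $v$ is induced by precomposition with $\Sigma^{a-1,b}\tau\colon S^{a,b}\to S^{a-1,b}\wedge M_n$ from~\eqref{eqn:cofiberoftau}. Consequently $v\circ\partial$ is induced by precomposition with the composite $S^{a,b}\to S^{a-1,b}\wedge M_n\to S^{a,b+1}$, which is exactly the morphism~\eqref{eqn:connectinghomom}. Proposition~\ref{prop:connectinghomomissymboltau} identifies the $\aone$-homotopy class of this composite with $[\tau]\in \K^{MW}_1(k)$, and Lemma~\ref{lem:actionofKMW} then identifies the induced map on homotopy sheaves with the action of $[\tau]$ via~\eqref{eqn:action}. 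Hence $v\circ\partial$ agrees with $[\tau]\cdot(-)$ and its image is visibly contained in $[\tau]\cdot\bpi_{a,b+1}^{\aone}(\mathscr{X})$, yielding the desired factorization at the sheaf level.

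For the second assertion, when $k$ is quadratically closed every unit of $k$ is a square, so $\langle -1\rangle = 1$ in $\mathbf{GW}(k)$ and therefore $n_{\epsilon}=\sum_{i=1}^n\langle(-1)^{i-1}\rangle = n$ as an element of $\mathbf{GW}(k)$. Consequently the $\mathbf{GW}(k)$-action of $n_{\epsilon}$ on $\bpi_{a-1,b+1}^{\aone}(\mathscr{X})(k)$ coincides with ordinary multiplication by $n$, and ${}_{n_{\epsilon}}\bpi_{a-1,b+1}^{\aone}(\mathscr{X})(k) = {}_n\bpi_{a-1,b+1}^{\aone}(\mathscr{X})(k)$. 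Evaluating the long exact sequence coming from~\eqref{eqn:shiftedmn} on $\Spec k$---noting that since $\Spec k$ is a point of the Nisnevich site the presheaf sections agree with sheaf sections there---produces a surjection $\bpi_{a-1,b}^{\aone}(\mathscr{X};n_{\epsilon})(k)\twoheadrightarrow {}_n\bpi_{a-1,b+1}^{\aone}(\mathscr{X})(k)$ along which $v(k)$ descends modulo $[\tau]\cdot\bpi_{a,b+1}^{\aone}(\mathscr{X})(k)$ by the same identification of $v\circ\partial$ as above.

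The main obstacle is essentially already resolved by Proposition~\ref{prop:connectinghomomissymboltau} and Lemma~\ref{lem:actionofKMW}, which compress the homotopical and the algebraic identifications; what remains is a diagram chase. The one subtlety to track is that the factorization is produced as a morphism of Nisnevich sheaves rather than merely on $k$-sections, but this is automatic because $[\tau]\cdot(-)$ is by construction a sheaf-theoretic operation and the cokernel in the top row is formed in the category of (strictly $\aone$-invariant) Nisnevich sheaves.
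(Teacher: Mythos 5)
Your proposal is correct and follows essentially the same route as the paper: identify the composite $v\circ\partial$ with precomposition by the morphism \eqref{eqn:connectinghomom}, invoke Proposition~\ref{prop:connectinghomomissymboltau} and Lemma~\ref{lem:actionofKMW} to recognize it as the action of $[\tau]$, and conclude by a diagram chase, with the quadratically closed case handled by the identification $n_{\epsilon}=n$ and passage to $k$-points (the paper phrases this as ``taking stalks''). The only difference is that you spell out the diagram chase and the exactness on sections more explicitly than the paper does.
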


\begin{proof}
In diagram~\eqref{diagram:weightshifting}, the composite morphism $\bpi_{a,b+1}^{\aone}(\mathscr{X}) \to \bpi_{a,b}^{\aone}(\mathscr{X})$ is, by construction, induced by composition with the morphism from \eqref{eqn:connectinghomom}.  The $\aone$-homotopy class of this composite is $[\tau]$ by appeal to Proposition~\ref{prop:connectinghomomissymboltau}.  The first result is deduced immediately from  Defintion~\ref{defn:actionofKMW} and the universal property of quotients.  If $\sqrt{-1}\in k$, then $n_{\epsilon} = n$ as noted in Remark \ref{rem:nepsilon} and the result follows by taking stalks.
\end{proof}

\subsubsection*{Complex realization and $\tau$}
We now analyze the behavior of $\tau$ under complex realization.  To this end, recall that if $\iota: k \hookrightarrow \cplx$ is a fixed embedding, then the functor sending $X \in \Sm_k$ to $X(\cplx)$ equipped with its usual structure of a complex analytic space extends to a functor
\[
\mathfrak{R}_{\iota}\colon \ho{k} \longrightarrow \mathscr{H},
\]
where $\mathscr{H}$ is the usual homotopy category.  The functor at the level of homotopy categories is described in \cite[\S 4]{MV}, but complex realization can be realized as the left Quillen functor of a Quillen pair for a suitable model for $\ho{k}$ \cite[Theorem 1.4]{DIRealization} (in particular, it preserves homotopy colimits).

\begin{lem}
\label{lem:complexrealizationoftau}
Suppose that $\iota: k \hookrightarrow \cplx$ is a fixed embedding, and that $\tau$ is a primitive $n$-th root of unity in $k$. Then, there exists an integer $1 \leq r < n$ coprime to $n$ such that $\iota(\tau)=e^{i\frac {2\pi r}n}$ and the following diagram commutes:
\[
\xymatrix{
{}_{n_\epsilon} \bpi_{a,b}^{\aone}(\mathscr{X})(k) \ar[r]^-{\tau}\ar[d]^{\mathfrak{R}_{\iota}} & {}_{n_\epsilon} (\piaone_{a+1,b-1}(\mathscr X)(k)/[\tau]\cdot \piaone_{a+1,b}(\mathscr X)(k)) \ar[d]^{\mathfrak{R}_{\iota}} \\
{}_n \pi_{a+b}(\mathfrak{R}_{\iota}\mathscr{X}) \ar[r]^{\cdot r} & {}_n\pi_{a+b}(\mathfrak{R}_{\iota}\mathscr{X}).
}
\]
Consequently, there exists a primitive $n$-th root of unity $\tau'\in k$ such that the diagram
\[
\xymatrix{
{}_{n_\epsilon} \bpi_{a,b}^{\aone}(\mathscr{X})(k) \ar[r]^-{\tau'}\ar[d]^{\mathfrak{R}_{\iota}} & {}_{n_\epsilon} (\piaone_{a+1,b-1}(\mathscr X)(k)/[\tau']\cdot \piaone_{a+1,b}(\mathscr X)(k)) \ar[d]^{\mathfrak{R}_{\iota}} \\
{}_n \pi_{a+b}(\mathfrak{R}_{\iota}\mathscr{X}) \ar[r]^{\mathrm{Id}} & {}_n\pi_{a+b}(\mathfrak{R}_{\iota}\mathscr{X})
}
\]
commutes.
\end{lem}

\begin{proof}
We unwind the definition of the weight-shifting map, which from arises from Diagram~\eqref{diagram:weightshifting}.  Under complex realization, the map of degree $n_{\epsilon}$ is sent to the standard degree $n$ map.  It follows that under complex realization ${}_{n_\epsilon} \bpi_{a,b}^{\aone}(\mathscr{X})(k)$ is sent to the $n$-torsion subgroup of $\pi_{a+b}(\mathfrak{R}_{\iota}\mathscr{X})$.  Likewise, the morphism $[\tau]:S^{a+1,b-1}\to S^{a+1,b}$ realizes to a map $S^{a+b}\to S^{a+b+1}$, and any such map is homotopically constant. It follows that the map ${}_{n_\epsilon} \piaone_{a+1,b-1}(\mathscr X)(k) \to \pi_{a+b}(\mathfrak{R}_{\iota}\mathscr{X})$ factors through a morphism ${}_{n_\epsilon} (\piaone_{a+1,b-1}(\mathscr X)(k)/[\tau]\piaone_{a+1,b}(\mathscr X)(k))$ to ${}_n\pi_{a+b}(\mathscr{X}(\cplx))$ as in the statement; this guarantees commutativity of the diagram for a suitable morphism in the bottom horizontal position.  

To identify the bottom horizontal morphism, we observe that the complex realization of $M_n$ is computed as the push-out of the diagram
\[
\xymatrix{S^1\ar[r]^-{e_n}\ar[d] & S^1\ar@{-->}[d]^-g \\
D^2\ar@{-->}[r] & \mathcal{M}_n:=S^1\sqcup_n D^2}
\]
where the left vertical map $S^1\to D^2$ is the standard embedding. If $\tau$ is a primitive $n$-th root of unity in $k$, there exists an integer $1\leq r<n$ which is coprime to $n$ such that $\iota(\tau)=e^{i\frac {2\pi r}n}$.  The pointed map $S^0\to S^1$ sending the non-base point to $e^{i\frac {2\pi r}n}$ fits into a commutative diagram
\[
\xymatrix{
S^0\ar[rrr]\ar[rrd]\ar[dd] & &  & \star \ar[rrd]\ar@{->}'[d][dd] & &\\
& & S^1\ar[rrr]^-{e_n}\ar[dd] & & & S^1\ar[dd]^-g \\
[0,1]\ar@{->}'[rr][rrr]\ar[rrd]_-j & & & S^1\ar@{-->}[rrd] & & \\
& & D^2\ar[rrr] & & & \mathcal{M}_n}
\]
where $j\colon [0,1]\to D^2$ is defined by $j(t)=e^{i\frac {2\pi rt}n}$. By construction, $j$ factors through $S^1\subset D^2$ and its composite with $e_n$ is trivial; it follows that the dotted arrow factors as
\[
S^1 \stackrel{e_r}{\longrightarrow} S^1 \stackrel{g}{\longrightarrow} \mathcal{M}_n.
\]
Now, the van Kampen theorem applied to the open cover of $\mathcal{M}_n$ given by $S^1\sqcup_n (D^2\smallsetminus \{0\})$ and $D^2\smallsetminus S^1$ allows us to conclude that $\pi_1(\mathcal M_n)=\Z/n\Z$, with generator $g$.  Then, the complex realization of $\tau$ is homotopic to the composite $ge_r$, which is a generator of $\pi_1(\mathcal M_n)$ and actually equal to $r\cdot g$ up to homotopy. This proves the first claim of the lemma. For the second claim, it suffices to choose $r'\in\N$ such that $rr'=1\pmod n$ and take $\tau':=\tau^{r'}$ in place of $\tau$.
\end{proof}

\begin{rem}
	\label{rem:fixingaprimitiveroot}
In order to avoid the uncertainty about the choice of a primitive $n$-th root of unity, we may as well assume that $k$ contains the $n$-th cyclotomic field $\Q(\xi_n)$ and that $\iota\colon k\to \cplx$ is a morphism fixing $\Q(\xi_n)$. We may then choose the obvious primitive $n$-th root in $\Q(\xi_n)$, in which case the bottom map in the statement of the lemma is the identity. 
\end{rem}

\subsubsection*{\'Etale realization and $\tau$}
For a discussion of \'etale realization of the motivic homotopy category we refer the reader to \cite{Isaksen}.  Briefly, if $\ell$ is prime, then we may define the $\ell$-complete \'etale realization functor on the category of schemes.  Given a scheme $X$, its \'etale realization is an $\ell$-complete pro-simplicial set that we will denote by $\mathrm{Et}(X)$.  The functor $\mathrm{Et}(-)$ has the property that a morphism of schemes $f\colon X\to Y$ induces a weak equivalence $\mathrm{Et}(X)\to \mathrm{Et}(Y)$ if and only if $f^*\colon \H^*_{\et}(Y;\Z/\ell)\to \H^*_{\et}(X;\Z/\ell)$ is an isomorphism.  By \cite[Theorem 6]{Isaksen}, if $k$ is a field and $\ell$ is different from the characteristic of $k$, the assignment $X \mapsto \mathrm{Et}(X)$ on smooth $k$-schemes extends to a left Quillen functor of a Quillen pair for a suitable model of $\hop{k}$; we write $\mathrm{Et}$ for the corresponding functor on homotopy categories as well.  If $k$ is furthermore separably closed, it follows from the K\"unneth isomorphism in \'etale cohomology with $\Z/\ell\Z$-coefficients that the functor $\mathrm{Et}$ preserves finite products (and smash products of pointed spaces).

Assume furthermore that $k$ is an algebraically closed field having positive characteristic, and let $R$ be the ring of Witt vectors of $k$.  Choose an algebraically closed field $K$ and embeddings $R \hookrightarrow K$ and $\cplx \hookrightarrow K$.  Suppose $G$ is a split reductive $\Z$-group scheme, and write $G_S$ for the base-change of $G$ along a morphism $\Z \to S$.  We obtain a diagram of morphisms of the form:
\[
G_k \longrightarrow G_R \longleftarrow G_K \longrightarrow G_{\cplx},
\]
which may be used to compare the \'etale realization of $G_k$ and the corresponding complex points.  In particular, for $G = \gm{}$, one knows that $\mathrm{Et}(\gm{}) = (S^1)^{\wedge}_{\ell}$.

\begin{lem}
\label{lem:etalerealizationoftau}
Let $k$ be a finite field with algebraic closure $k^{\mathrm{alg}}$, $\ell$ be a prime different from the characteristic of the base field, and $\tau$ be a primitive $\ell$-th root of unity in $k$.  There exists a primitive $\ell$-th root of unity $\tau'\in k$, corresponding under the lifting and embedding in $\cplx$ just described with the root of unity $e^{\frac {2\pi i}n}$, such that the following diagram commutes:
\[
\xymatrix{
{}_{\ell_\epsilon} \bpi_{a,b}^{\aone}(\mathscr{X})(k) \ar[r]\ar[d]_{\mathrm{Et}} & {}_{\ell_\epsilon} (\piaone_{a+1,b-1}(\mathscr X)(k)/[\tau']\cdot \piaone_{a+1,b}(\mathscr X)(k)) \ar[d]^{\mathrm{Et}} \\
{}_\ell \pi_{a+b}(\mathrm{Et}\mathscr{X}) \ar[r]_{\mathrm{Id}} & {}_\ell\pi_{a+b}(\mathrm{Et}\mathscr{X}).
}
\]
\end{lem}

\begin{proof}
The existence of a commutative diagram with some morphism in the bottom horizontal position follows along exactly the same lines as in Lemma~\ref{lem:complexrealizationoftau}, in this case observing that the map $\ell_{\epsilon}$ is sent to the degree $\ell$ map.  It then remains to conclude that the bottom horizontal arrow is the identity map for a suitable choice of $\tau'$.  This can be argued as in the proof of Lemma~\ref{lem:complexrealizationoftau} replacing $\tau$ by a suitable power if necessary (or see Remark~\ref{rem:fixingaprimitiveroot}).

In this case, since \'etale realization commutes with homotopy colimits, we conclude that $\mathrm{Et}(M_\ell)$ is the $\ell$-completion of the usual Moore space $M_\ell$.  Likewise, $\mathrm{Et}(\gm{}) = (S^1)^{\wedge}_{\ell}$ by lifting to characteristic zero as described before the statement.  To conclude, it suffices to observe that the morphism $\tau$ is actually defined over $\Z[\tau]$ (here thinking of $\tau$ as the root of unity).  In that case, our result follows from Lemma~\ref{lem:complexrealizationoftau} by taking $\ell$-completions.
\end{proof}

\subsection{Building vector bundles and maps}
\label{ss:buildingvbandmaps}
In this section, we will prove the existence of various morphisms of quadrics.  By adjunction arguments, these morphisms of quadrics give rise to corresponding vector bundles.  As a warm-up we answer some old questions of R. Wood on existence of algebraic representatives of elements in classical homotopy groups of spheres.  To this end, recall that the quadric $Q_{2n-1}$ has the $\aone$-homotopy type of $\Sigma^{n-1} \gm{\sma n}$ and $Q_{2n}$ has the $\aone$-homotopy type of $\Sigma^n \gm{\sma n}$ by \cite[Theorem 2]{AsokDoranFasel} (both results hold over $\Spec \Z$).  Then, by \cite[Theorem 4.2.1]{AsokHoyoisWendtII}, we know that if $R$ is any smooth $k$-algebra, then any element of $[\Spec R,Q_{2n-1}]_{\aone}$ may be represented by an explicit morphism of schemes $\Spec R \to Q_{2n-1}$.  Likewise, by \cite[Theorem 2]{AsokQuadrics}, any element of $[\Spec R,Q_{2n}]_{\aone}$ may be represented by a morphism $\Spec R \to Q_{2n}$.  Combining these two observations, we see that any element of $[Q_m,Q_n]_{\aone}$ is represented by a morphism of quadrics, for any $m,n\geq 1$. 

\subsubsection*{Morphisms of quadrics of very low degree}
The next result answers \cite[Question 3]{WoodQuadrics} and also establishes the case of Theorem~\ref{thmintro:wood} corresponding to $p = 2$.

\begin{prop}
\label{prop:tautatesuspensionofeta}
If $k = \cplx$, then for every integer $i \geq 0$ there exist morphisms of quadrics $Q_{4+2i} \to Q_{3+2i}$ and $Q_{5+2i} \to Q_{3+2i}$ providing polynomial representatives of the $2i+1$-fold suspensions of the Hopf map and of the square of the Hopf map. 
\end{prop}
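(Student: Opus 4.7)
The plan is to build the required maps at the level of $\aone$-homotopy classes and then invoke the affine representability results of \cite{AsokHoyoisWendtII} recalled in the paragraph preceding the statement, which convert any such class into an actual polynomial morphism between the affine quadrics. Using the $\aone$-weak equivalences $Q_3 \simeq S^{1,2}$, $Q_4 \simeq S^{2,2}$, and $Q_5 \simeq S^{2,3}$ of \cite[Theorem~2]{AsokDoranFasel}, the task reduces to exhibiting classes in $\piaone_{2,2}(Q_3)(\cplx)$ and $\piaone_{2,3}(Q_3)(\cplx)$ whose complex realizations are the generators $\Sigma\eta \in \pi_4(S^3) = \Z/2$ and $\Sigma^2\eta \in \pi_5(S^3) = \Z/2$.

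The natural starting point is the algebraic Hopf map $\eta\colon \A^2\setminus 0 \to \pone$, $(x,y)\mapsto [x:y]$, an explicit polynomial morphism whose $\aone$-homotopy class lies in $\piaone_{1,2}(\pone)(\cplx)$ and whose complex realization is the topological Hopf map $S^3\to S^2$. Its $\gm{}$-suspensions $\Sigma_{\gm{}}\eta \in \piaone_{1,3}(Q_3)(\cplx)$ and $\Sigma_{\gm{}}^{2}\eta \in \piaone_{1,4}(Q_3)(\cplx)$ complex-realize to $\Sigma\eta$ and $\Sigma^2\eta$ respectively (smashing with $\gm{}$ realizes to smashing with $S^1$), but they lie in the wrong source bidegrees $(1,3)$ and $(1,4)$ rather than the needed $(2,2)$ and $(2,3)$.

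To correct the bidegree, I would apply the weight-shifting device of Theorem~\ref{thm:weightshifting}. Since $\cplx$ is quadratically closed and contains the primitive square root of unity $\tau=-1 \in \bmu_2(\cplx)$, the theorem supplies homomorphisms
\[
\tau\colon {}_{2}\piaone_{1,b+1}(Q_3)(\cplx) \longrightarrow \piaone_{2,b}(Q_3)(\cplx) \big/ [\tau]\cdot \piaone_{2,b+1}(Q_3)(\cplx)
\]
for each integer $b\geq 1$. Granted that $\Sigma_{\gm{}}\eta$ and $\Sigma_{\gm{}}^{2}\eta$ represent $2$-torsion classes in the relevant $\aone$-homotopy sheaves, applying $\tau$ with $b=2$ and $b=3$ produces classes in the target quotients; any lift to $\piaone_{2,2}(Q_3)(\cplx)$ and $\piaone_{2,3}(Q_3)(\cplx)$ again complex-realizes to $\Sigma\eta$ and $\Sigma^2\eta$ respectively, by Lemma~\ref{lem:complexrealizationoftau}. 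Affine representability then converts these $\aone$-homotopy classes into polynomial morphisms $Q_4 \to Q_3$ and $Q_5 \to Q_3$, and the resulting morphisms are automatically nonconstant because their complex realizations are the nonzero elements of $\Z/2$.

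The hardest part I anticipate is verifying that the starting classes $\Sigma_{\gm{}}\eta$ and $\Sigma_{\gm{}}^{2}\eta$ are genuinely $2$-torsion at the level of $\aone$-homotopy sheaves, rather than merely after complex realization (where $2$-torsion is automatic). Establishing this seems to require either a direct analysis of the unstable $\aone$-homotopy sheaves of $Q_3$ in the bidegrees $(1,3)$ and $(1,4)$, or an appeal to the $2$-local splitting of $SL_n$ from \cite[Theorem~5.2.1]{AsokFaselHopkinsNilpotence} advertised in the outline of the method, together with a comparison of the $\aone$-class with an algebraically constructed torsion witness.
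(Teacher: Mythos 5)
Your proposal follows essentially the same route as the paper: algebraic Hopf map, $\gm{}$-suspension, weight-shifting via $\tau$ (Theorem~\ref{thm:weightshifting}), comparison under complex realization (Lemma~\ref{lem:complexrealizationoftau}), and affine representability to produce actual morphisms. The torsion verification you flag as the hardest step is exactly where the paper cites Morel: $\bpi_{1,3}^{\aone}(\A^2\setminus 0)\cong \mathbf{W}$ by \cite[Theorems 6.13 and 7.20]{MField} (and likewise $\bpi_{1,4}^{\aone}(\A^2\setminus 0)\cong (\K^{MW}_2)_{-4}\cong \mathbf{W}$), so over $\cplx$ these groups are $W(\cplx)\cong\Z/2$ and every class is $2$-torsion; moreover $\mathbf{W}$ is $2_\epsilon$-torsion as a $\K^{MW}_0$-module, which is the hypothesis Theorem~\ref{thm:weightshifting} actually needs. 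The only (harmless) divergence is for $Q_5\to Q_3$: rather than applying $\tau$ to $\Sigma_{\gm{}}^2\eta$, the paper composes $\Sigma_{\pone}\eta\colon Q_5\to Q_4$ with the already-constructed $\tau\Sigma_{\gm{}}\eta\colon Q_4\to Q_3$, which avoids a second torsion check; both variants work.
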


\begin{proof}
Consider the map $\eta: {\mathbb A}^2 \smallsetminus 0 \longrightarrow {\pone}$.  The $\gm{}$-suspension of $\eta$ yields a map $\Sigma_{\gm{}}\eta: S^1 \sma \gm{\sma 3} \longrightarrow {\mathbb A}^2 \smallsetminus 0$.  Combining \cite[Theorems 6.13 and 6.40]{MField}, we know that $\bpi_{1,3}^{\aone}({\mathbb A}^2 \smallsetminus 0)(\cplx) \cong \mathrm{W}(\cplx)\cong \Z/2$ and we conclude that $\Sigma_{\gm{}}\eta$ is a $2$-torsion class.  Appealing to Theorem~\ref{thm:weightshifting} yields an element $\tau \Sigma_{\gm{}}\eta: {\pone}^{\sma 2} \to {\mathbb A}^2 \smallsetminus 0$ and Lemma~\ref{lem:complexrealizationoftau} shows that this element is mapped under complex realization to the suspension of $\eta$.

Now, by \cite[Proposition 2.1.2]{AsokDoranFasel} we know that $Q_4 \cong {\pone}^{\sma 2}$ in $\ho{\cplx}$.  Likewise, $Q_3 \cong {\mathbb A}^2 \smallsetminus 0$ in $\ho{\cplx}$.  Then, by \cite[Theorem 4.2.1]{AsokHoyoisWendtII}, since the map $\pi_0(\Singaone Q_3)(R) \to [\Spec R,Q_3]_{\aone}$ is a bijection for any smooth affine $\cplx$-algebra $R$, we conclude that $\tau \Sigma_{\gm{}}\eta$ is represented by a morphism $Q_4 \to Q_3$ as required.

For the second statement, observe that composing $\Sigma_{\pone} \eta$ and $\tau \Sigma_{\gm{}}\eta$ yields an element of $[{\mathbb A}^3 \smallsetminus 0,{\mathbb A}^2 \smallsetminus 0]_{\aone}$ that is mapped by complex realization to the square of the Hopf map.  Again, by \cite[Theorem 4.2.1]{AsokHoyoisWendtII}, this element is represented by a morphism of quadrics $Q_5 \to Q_3$.  It remains to observe that all of the elements just constructed are $\pone$-stably non-trivial to conclude.
\end{proof}

\begin{rem}
In \cite[Theorem 7.4]{AsokFaselThreefolds}, we studied $\bpi_{2,3}^{\aone}(\mathrm{SL}_2)$ under complex realization.  In particular, we claimed that the realization map sends $\Z/2 \cong \bpi_{2,3}^{\aone}(\mathrm{SL}_2)(\cplx)$  isomorphically to $\pi_5(S^3)=\Z/2$.  The proof of this assertion in \cite[Theorem 7.4]{AsokFaselThreefolds} is mistaken: the argument there correctly establishes that $\bpi_{2,3}^{\aone}(\mathrm{SL}_2)(\cplx) \cong \Z/2$, and attempts to demonstrate that realization map is an isomorphism by constructing an explicit lift of a generator.  This putative lift is defined as a composite of two maps, one of which is $\Sigma_{\gm{}} \eta$, which has target $SL_2$, but whose source is {\em not} ${\pone}^{\sma 2}$ as claimed.  In particular, the composite from the proof of \cite[Theorem 7.4]{AsokFaselThreefolds} is not defined.  Proposition \ref{prop:tautatesuspensionofeta} supplies a correct proof of this result.
\end{rem}

\subsubsection*{A quick review of localization in ${\mathbb A}^1$-homotopy theory}
In order to establish higher degree analogs of Proposition~\ref{prop:tautatesuspensionofeta}, we will use techniques of localization in ${\mathbb A}^1$-homotopy theory as developed in \cite{AsokFaselHopkinsNilpotence}.  Suppose $R \subset \Q$ is a subring of the rational numbers (in the sequel, $R$ will be $\Z[\frac{1}{n}]$ for a suitable integer $n$).  In \cite{AsokFaselHopkinsNilpotence}, we constructed an $R$-localization on the unstable $\aone$-homotopy category.  In more detail, by \cite[Proposition 4.3.8]{AsokFaselHopkinsNilpotence} if $\mathscr{X}$ is any pointed, weakly $\aone$-nilpotent space (see \cite[Definition 3.3.1]{AsokFaselHopkinsNilpotence}), then there is a space $\mathrm{L}_R \mathscr{X}$ together with a morphism $\mathscr{X} \to \mathrm{L}_R \mathscr{X}$ such that $\mathrm{L}_R \mathscr{X}$ is again weakly $\aone$-nilpotent and the induced map $\bpi_i^{\aone}(\mathscr{X}) \to \bpi_i^{\aone}(\mathrm{L}_R \mathscr{X})$ is simply tensoring with $R$ for $i \geq 1$.  For later use, it suffices to observe that $\mathrm{SL}_n$, pointed by the identity element, is weakly $\aone$-nilpotent since it is an $\aone$-$h$-space \cite[Example 3.4.1]{AsokFaselHopkinsNilpotence}.  Using these localization techniques, we established the following splitting result, which is an analog of a classical result of Serre.

\begin{thm}[{\cite[Theorem 5.2.1]{AsokFaselHopkinsNilpotence}}]
\label{thm:suslinsplitting}
If $k$ is a field that is not formally real, then there is an $\aone$-weak equivalence
\[
Q_3 \times Q_5 \times \cdots \times Q_{2n-1} \isomto \mathrm{SL}_n
\]
after inverting $(n-1)!$.
\end{thm}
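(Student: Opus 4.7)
The plan is to argue by induction on $n$, using the Nisnevich-locally trivial fibration $SL_{n-1} \hookrightarrow SL_n \twoheadrightarrow SL_n/SL_{n-1}$, in which the embedding is as the stabilizer of the first basis vector and the quotient is identified with $\A^n \setminus 0$ (via evaluation on $e_1$, i.e., the first column map), which is $\aone$-weakly equivalent to $Q_{2n-1}$. The base case $n=2$ is immediate from the scheme-theoretic isomorphism $SL_2 \cong Q_3$. For the inductive step, after $R$-localizing with $R = \Z[\tfrac{1}{(n-1)!}]$, I would reduce the problem to constructing an $\aone$-homotopy section $s\colon Q_{2n-1} \to SL_n$ of the projection $\pi$: given such a section, the multiplication map $(A,v)\mapsto \iota(A)\cdot s(v)$ from $SL_{n-1}\times Q_{2n-1}$ to $SL_n$ is an $\aone$-weak equivalence in $\mathrm{L}_R\ho{k}$, as one verifies on $\aone$-homotopy sheaves via the long exact sequence of the fibration together with the Whitehead theorem for weakly $\aone$-nilpotent spaces from the Asok--Fasel--Hopkins nilpotence paper. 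Combining with the inductive hypothesis for $SL_{n-1}$ then yields the splitting.

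The core of the argument is the construction of $s$ after inverting $(n-1)!$. I would produce an explicit morphism $\phi\colon Q_{2n-1} \to SL_n$ (for instance, built from a Suslin-matrix construction on unimodular rows) such that the composite $\pi\circ\phi$ has $\aone$-degree $(n-1)!$ as a self-map of the motivic sphere $Q_{2n-1}\simeq \Sigma^{n-1}\gm{\sma n}$, where degree is interpreted in the endomorphism sheaf given by Theorem~\ref{thm:morelcomputation}. This is the motivic reflection of Serre's classical observation that the principal $SU(n-1)$-bundle $SU(n)\to S^{2n-1}$ is classified by an $(n-1)!$-torsion element of $\pi_{2n-2}(SU(n-1))$, so that the bundle projection acquires a section after inverting $(n-1)!$. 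With such $\phi$ in hand, $(n-1)!$ is a unit in $R$, so $\pi\circ\phi$ becomes an $\aone$-weak equivalence in $\mathrm{L}_R\ho{k}$, and $s := \phi\circ(\pi\circ\phi)^{-1}$ is the desired section.

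The non-formally-real hypothesis enters precisely in translating between the ordinary integer $(n-1)!$ and its $\epsilon$-twisted analog $((n-1)!)_\epsilon$ from Proposition~\ref{prop:suspensionsofn}: when $k$ is not formally real, Pfister's theorem says $W(k)$ is $2$-primary torsion, so for $n\ge 3$ (where $2\mid (n-1)!$) the localization $\mathbf{GW}(k)[\tfrac{1}{(n-1)!}]$ reduces to $\Z[\tfrac{1}{(n-1)!}]$ and the $\epsilon$-twist disappears. The hard part will be the $\aone$-degree computation for $\pi\circ\phi$: classically this reduces to a Samelson-product calculation on the $H$-space $SU(n)$, and the motivic analog requires either a direct intersection-theoretic calculation of Chern--Witt classes of the rank $n-1$ vector bundle associated with the torsor $\pi$, or an $\aone$-homotopical reinterpretation of Suslin's completion theorem for unimodular rows, in which $(n-1)!$ enters through the algebraic identities that complete $(a_1,\ldots,a_{n-1},a_n^{(n-1)!})$ to an invertible matrix.
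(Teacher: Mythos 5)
The paper does not prove this statement---it is imported verbatim from \cite[Theorem 5.2.1]{AsokFaselHopkinsNilpotence}---but the strategy it indicates in Remark~\ref{rem:explicitalpha1} (induct along the fibration $SL_{n-1}\to SL_n\to Q_{2n-1}$ and use the Suslin-matrix construction to show the clutching function is killed by the relevant factorial, so the torsor splits after $R$-localization) is exactly the one you propose. Your outline, including where the non-formally-real hypothesis enters to collapse $((n-1)!)_\epsilon$ to $(n-1)!$, matches the intended argument, with the deferred degree computation being precisely the content of the cited Lemma 5.1.4 there.
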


\begin{proof}[Highlights of proof.]
	For the reader willing to take the existence of a good theory of $R$-localizations for granted, in the interest of making this paper slightly more self-contained we briefly sketch the key ideas in the proof of \cite[Theorem 5.2.1]{AsokFaselHopkinsNilpotence}.  We consider the quotient morphism $\mathrm{SL}_n \to \mathrm{SL}_n/\mathrm{SL}_{n-1} \cong Q_{2n-1}$.  Suslin constructed a morphism $S_n: Q_{2n-1} \to \mathrm{SL}_n$ (see \cite[Theorem 2]{Suslin} or \cite[pp. 701-2]{AsokFaselHopkinsNilpotence} for the key points relevant for this analysis); post-composing with the quotient map determines an endomorphism of $Q_{2n-1}$ whose motivic degree is $((n-1)!)_{\epsilon}$ \cite[Lemma 5.1.4]{AsokFaselHopkinsNilpotence}.  After inverting $(n-1)!$, \cite[Lemma 5.1.2]{AsokFaselHopkinsNilpotence} tells us that under the hypotheses on $k$, $GW(k) \tensor R \cong R$ and thus $((n-1)!)_{\epsilon}$ is invertible as soon as $(n-1)!$ is invertible.  We obtain the required splitting by induction.
\end{proof}

In \cite[Theorem 5.3.3]{AsokFaselHopkinsNilpotence}, we also studied the rational $\aone$-homotopy type of ${\mathbb A}^n \smallsetminus 0$.  In particular, we showed that if $k$ is a field that is not formally real, then the natural map ${\mathbb A}^m \smallsetminus 0 \to \mathrm{K}(\Z(m),2m-1)$ is a $\Q$-$\aone$-weak equivalence.  The homotopy sheaves of $\mathrm{K}(\Z(m),2m-1)$ are given by sheafifying Voevodsky's motivic cohomology groups and are, in general, rather non-trivial. Explicitly, we have
\[
\bpi_i^{\aone}(\mathrm{K}(\Z(m),2m-1))(L)=\H^{2m-1-i,m}(L,\Z)
\]
for any finitely generated field extension $L$ of the base field. For example, the Beilinson--Soul\'e vanishing conjecture predicts that the sheaf $\bpi_i^{\aone}(\mathrm{K}(\Z(m),2m-1))$ vanishes for $i \geq 2m-1$.  Nevertheless, the motivic cohomology of quadrics is rather simple and may be used to produce a large potential class of maps to which we may apply Theorem~\ref{thm:weightshifting}. 

\begin{prop}
\label{prop:geometrictorsion}
If $k$ is a field that is not formally real, then the sheaf $\bpi_{i,j}^{\aone}({\mathbb A}^m \smallsetminus 0)_{\Q}$ vanishes for $j > m$.
\end{prop}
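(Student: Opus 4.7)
The plan is to transport the computation across the rational equivalence of \cite[Theorem 5.3.3]{AsokFaselHopkinsNilpotence} into a question about motivic cohomology, and then invoke vanishing of motivic cohomology in negative weight.

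First, I would invoke \cite[Theorem 5.3.3]{AsokFaselHopkinsNilpotence} directly: our hypothesis that $k$ is not formally real is exactly what that theorem requires, so it produces a $\Q$-$\aone$-weak equivalence ${\mathbb A}^n \setminus 0 \to K(\Z(n),2n-1)$ and consequent isomorphisms of rationalized homotopy sheaves
\[
\bpi_{i,j}^{\aone}({\mathbb A}^n \setminus 0)_{\Q} \;\isomto\; \bpi_{i,j}^{\aone}(K(\Z(n),2n-1))_{\Q}.
\]
It therefore suffices to show that the target vanishes for $j > n$; in fact this already holds integrally.

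Next, I would identify the homotopy sheaf of the motivic Eilenberg--MacLane space with motivic cohomology. The defining representability property together with the suspension isomorphism gives, for any $U \in \Sm_k$,
\[
\bpi_{i,j}^{\aone}(K(\Z(n),2n-1))(U) \;\cong\; \widetilde{H}^{2n-1,\,n}(S^{i,j} \wedge U_+, \Z) \;\cong\; H^{2n-1-i-j,\; n-j}(U,\Z),
\]
since in the paper's convention $S^{a,b} = S^a \wedge \gm{\sma b}$ carries its top-dimensional motivic cohomology class in bidegree $(a+b,b)$. When $j > n$ the weight $n-j$ is strictly negative, and motivic cohomology $H^{p,q}(-,\Z)$ is identically zero for $q < 0$ by construction of the motivic complex. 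Hence the presheaf — and a fortiori its Nisnevich sheafification — vanishes, and so does its rationalization.

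The argument is essentially bookkeeping, and I do not expect any serious obstacle. The one minor point worth checking is that rational localization is meaningful on ${\mathbb A}^n \setminus 0$, i.e., that this space satisfies the weak $\aone$-nilpotence hypothesis of \cite[Definition 3.3.1]{AsokFaselHopkinsNilpotence}. For $n \geq 2$ this is immediate from $\aone$-simple connectivity, while $\gm{} = {\mathbb A}^1 \setminus 0$ is an $\aone$-$h$-space and hence covered by \cite[Example 3.4.1]{AsokFaselHopkinsNilpotence}; so the localization framework applies uniformly.
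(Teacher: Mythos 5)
Your proposal is correct and follows essentially the same route as the paper: pass through the $\Q$-$\aone$-weak equivalence ${\mathbb A}^n \setminus 0 \to K(\Z(n),2n-1)$ of \cite[Theorem 5.3.3]{AsokFaselHopkinsNilpotence}, identify the homotopy sheaves of the motivic Eilenberg--MacLane space with motivic cohomology of weight $n-j$ via the (Tate) suspension isomorphism, and conclude by vanishing of motivic cohomology in negative weight. The only cosmetic difference is that the paper phrases the weight shift via contractions $(\mathbf{H}^{2n-1-i,n}_{\Q})_{-j}$ while you work at the level of sections over $U$, and your added check of weak $\aone$-nilpotence is a reasonable piece of diligence the paper leaves implicit.
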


\begin{proof}
Since ${\mathbb A}^m \smallsetminus 0 \to \mathrm{K}(\Z(m),2m-1)$ is a $\Q$-$\aone$-weak equivalence, it follows that $\bpi_{i}^{\aone}({\mathbb A}^m \smallsetminus 0)_{\Q} \cong \bpi_{i}^{\aone}(\mathrm{K}(\Z(m),2m-1))_{\Q} \cong \mathbf{H}^{2m-1-i,m}_{\Q}$, where the latter is the Nisnevich sheaf associated to the presheaf $\H^{2m-1-i,m}(-,\Q)$.  On the other hand,
\[
\bpi_{i,j}^{\aone}(\mathrm{K}(\Z(m),2m-1))_{\Q} \cong (\mathbf{H}^{2m-1-i,m}_{\Q})_{-j} \cong \mathbf{H}^{2m-1-i-j,m-j}_{\Q},
\]
where the final isomorphism follows from the Tate suspension isomorphism in motivic cohomology.  Since motivic cohomology in negative weight vanishes by definition, the result follows.
\end{proof}

\subsubsection*{Unstable motivic $\alpha_1$ and related homotopy classes}
We are now in a position to give the algebro-geometric construction of the class $\alpha_1$ mentioned in the introduction.  The geometric motivation for the construction is provided in Remark~\ref{rem:explicitalpha1}, which also gives an explicit model for $\alpha_1$.

\begin{prop}
\label{prop:alpha1}
Assume $k$ is a field that is not formally real and has characteristic different from $2$.  If $p$ is an odd prime number, then there exists a $p_{\epsilon}$-torsion class $\alpha_1 \in \bpi_{p-1,p+1}^{\aone}(S^{1,2})$ having the following properties:
\begin{enumerate}[noitemsep,topsep=1pt]
\item $\alpha_1$ is compatible with base extension;
\item if $k$ has characteristic $0$, then the complex realization of $\alpha_1$ is a class $\alpha_1^{\topo} \in \pi_{2p}(S^3)$ generating the $p$-torsion subgroup ($\cong \Z/p$) of the latter; if $k$ has characteristic different from $p$, then the \'etale realization of the base-extension of $\alpha_1$ to $\bar{k}$ generates the $p$-torsion subgroup of $\pi_{2p}^{\mathrm{\acute{e}t}}(Q_3)^{\wedge}_p$; in either case $\alpha_1$ is $\pone$-stably non-trivial;
\item if $k$ satisfies either of the hypotheses in Point (2) and furthermore contains a primitive $p$-th root of unity, then there exists a non-trivial class $\tau\alpha_1 \in \bpi_{p,p}^{\aone}(S^{1,2})$; again this class is $\pone$-stably non-trivial.
\end{enumerate}
\end{prop}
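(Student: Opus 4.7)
The plan is to construct $\alpha_1$ as a $p$-torsion class in the $Q_3 \simeq S^{1,2}$ summand of $SL_p$ under the $p$-local splitting of Theorem~\ref{thm:suslinsplitting}, detected by complex and \'etale realization, and then to obtain $\tau\alpha_1$ by a direct application of Theorem~\ref{thm:weightshifting}.

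First I would work over a small base $k_0$: take $k_0 = \Q[i]$ in characteristic zero and $k_0 = \mathbb{F}_\ell$ in positive characteristic $\ell \neq p$. In both cases $k_0$ is not formally real and $(p-1)!$ is invertible, so Theorem~\ref{thm:suslinsplitting} gives an $\aone$-weak equivalence $Q_3 \times Q_5 \times \cdots \times Q_{2p-1} \isomto SL_p$ after inverting $(p-1)!$, in particular after $p$-localization. Using $Q_{2i-1} \simeq S^{i-1,i}$, this produces a split direct summand
\[
\bpi_{p-1,p+1}^{\aone}(Q_3)_{(p)} \hookrightarrow \bpi_{p-1,p+1}^{\aone}(SL_p)_{(p)} \cong \bigoplus_{i=2}^{p} \bpi_{p-1,p+1}^{\aone}(Q_{2i-1})_{(p)}.
\]
By Proposition~\ref{prop:geometrictorsion}, each summand has vanishing rationalization (since $p+1 > i$ for $i \leq p$), so each is $p$-primary torsion. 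Under complex realization this splitting becomes the classical Serre splitting $SU(p)_{(p)} \simeq S^3_{(p)} \times \cdots \times S^{2p-1}_{(p)}$, and Serre's theorem gives $\pi_{2p}(SU(p))_{(p)} \cong \pi_{2p}(S^3)_{(p)} = \Z/p$, generated by $\alpha_1^{top}$ and concentrated in the $S^3$ factor.

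I would then upgrade to a motivic statement: exhibit a class in $\bpi_{p-1,p+1}^{\aone}(S^{1,2})_{(p)}(k_0)$ mapping under complex realization to $\alpha_1^{top}$. This is the heart of the argument; I would pursue it by analyzing the motivic Moore--Postnikov tower of $S^{1,2}$ and comparing its low layers (which compute appropriate motivic cohomology groups of $S^{1,2}$) with the classical Postnikov tower of $S^3$ via Beilinson--Lichtenbaum-style identifications in the relevant $p$-torsion range. An alternative, more constructive route is to build $\alpha_1$ directly as a motivic Toda bracket or as an attaching map using the Moore-space machinery of Section~\ref{ss:moorespaces}. Any lift of $\alpha_1^{top}$ (scaled by a unit if necessary to generate) yields the $p$-torsion class $\alpha_1$ defined over $k_0$, proving (1). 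For (2), complex realization gives $\alpha_1^{top}$ by construction; for the \'etale part, we use that over $\bar{\mathbb{F}}_p$ the $\ell$-complete \'etale realization sends $Q_3$ to an $\ell$-complete $3$-sphere, so $\pi_{2p}^{\et}(Q_3)^{\wedge}_p \cong \Z/p$, and compatibility of the two splittings under \'etale realization identifies $\alpha_1$. The $\pone$-stable non-triviality reduces to $\alpha_1^{top}$ surviving in the classical stable stem $\pi^s_{2p-3}$, combined with compatibility of realization and $\pone$-suspension. For part (3), after base-change to $k \supset \bmu_p$ (and, if needed, a further base-change to render $-1$ a square so that $p_\epsilon = p$ in the relevant $\mathbf{GW}$-module sense, with the final class descending to $k$ via Galois invariance), $\alpha_1$ lies in ${}_{p_\epsilon}\bpi_{p-1,p+1}^{\aone}(S^{1,2})(k)$. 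Theorem~\ref{thm:weightshifting} then produces $\tau\alpha_1$ in the quotient $\bpi_{p,p}^{\aone}(S^{1,2})(k)/[\tau]\cdot\bpi_{p,p+1}^{\aone}(S^{1,2})(k)$; pick any lift, still called $\tau\alpha_1$. Non-triviality and $\pone$-stable non-triviality follow from Lemmas~\ref{lem:complexrealizationoftau} and \ref{lem:etalerealizationoftau}: under realization the quotient collapses and $\tau\alpha_1$ maps to the nonzero class $\alpha_1^{top}$.

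The main obstacle is the lifting step in the construction of $\alpha_1$: while Proposition~\ref{prop:geometrictorsion} eliminates rational contributions and Theorem~\ref{thm:suslinsplitting} isolates the $Q_3$ summand, producing a genuine motivic class lifting $\alpha_1^{top}$ requires either a careful obstruction-theoretic comparison of motivic and classical Postnikov towers in this unstable bidegree (where standard stable comparison tools do not immediately apply) or a direct geometric construction of the class. A secondary subtlety is the Grothendieck--Witt bookkeeping for the weight shift when $k$ is not quadratically closed, which may require an auxiliary base-change-and-descent argument to apply Theorem~\ref{thm:weightshifting} cleanly.
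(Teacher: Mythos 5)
There is a genuine gap: you never actually construct $\alpha_1$. Your outline correctly sets up the framework (the $p$-local splitting of $SL_p$ from Theorem~\ref{thm:suslinsplitting}, detection by complex and \'etale realization), but the step you yourself flag as ``the heart of the argument'' --- producing a motivic class in $\bpi_{p-1,p+1}^{\aone}(S^{1,2})(k_0)$ lifting $\alpha_1^{top}$ --- is left unresolved, with only speculative suggestions (motivic Moore--Postnikov towers, Beilinson--Lichtenbaum comparisons, Toda brackets). As written, the proposal establishes at best that \emph{if} such a lift exists it has the desired properties; the existence itself is exactly what needs proof.

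The paper sidesteps the lifting problem entirely by invoking a prior unstable computation: by \cite[Theorem 3.14, Proposition 3.15]{AsokFaselSpheres} (with the auxiliary lemmas cited there), $\bpi_{p-1,p+1}^{\aone}(SL_p)$ is the \emph{constant} sheaf $\Z/p!\Z$, already over the prime field. After inverting $(p-1)!$ this group is $\Z/p$, so a generator $\alpha_1$ exists tautologically --- there is nothing to lift. Realization then enters only to \emph{verify} that this generator projects nontrivially onto the $Q_3$ summand and realizes $\alpha_1^{top}$: by \cite[Theorem 5.5]{AsokFaselSpheres} complex realization is an isomorphism $\bpi_{p-1,p+1}^{\aone}(SL_p)(\cplx) \isomt \pi_{2p}(SU_p)$, and Serre's computation concentrates the $p$-torsion in the $S^3$ factor. (Remark~\ref{rem:explicitalpha1} records the geometric source of the $\Z/p!$: the clutching function of $SL_{p+1}\to Q_{2p+1}$ and Suslin matrices.) Note also that your ``secondary subtlety'' about Grothendieck--Witt bookkeeping in part (3) dissolves for the same reason: because $\bpi_{p-1,p+1}^{\aone}(SL_p)$ is a constant sheaf of abelian groups, the $\mathbf{GW}$-action factors through the rank map, so $p$-torsion and $p_{\epsilon}$-torsion coincide and Theorem~\ref{thm:weightshifting} applies directly, with no base-change-and-descent argument needed. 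To repair your proof you would need to import the $\Z/p!\Z$ computation (or an equivalent) as an explicit input rather than hoping to recover it from a Postnikov-tower comparison.
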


\begin{proof}
Since $p$ is odd, \cite[Theorem 3.14 and Proposition 3.15]{AsokFaselSpheres} in conjunction with \cite[Lemma 2.7, Remark 2.8 and Corollary 3.11]{AsokFaselSpheres} imply that there is an isomorphism of sheaves of the form:
\[
\bpi_{p-1,p+1}^{\aone}(\mathrm{SL}_p) \cong \Z/{p!}\Z
\]
(here $\Z/{p!}\Z$ is a constant sheaf).  This isomorphism holds over the prime field (assuming it has characteristic different from $2$) and therefore over any extension of the base-field by standard base-change results (see \cite[Lemma A.2 and A.4]{HoyoisHM}).  After inverting $(p-1)!$, we conclude that $\bpi_{p-1,p+1}^{\aone}(\mathrm{SL}_p) \cong \Z/p\Z$.  Choose any generator of the latter group and call it $\alpha_1$.

We claim that the projection of $\alpha_1$ onto $\bpi_{p-1,p+1}^{\aone}({\mathbb A}^2 \smallsetminus 0)$ under the splitting of Theorem~\ref{thm:suslinsplitting} yields the existence statement.  Indeed, Theorem~\ref{thm:suslinsplitting} holds over any field of positive characteristic or over $\Q(\xi_p)$ for a primitive $p$-th root of unity (such fields are not formally real); it suffices to establish the result for one of these fields.  We now establish points (1) and (2) of the statement simultaneously.  Since the sheaf $\Z/p!\Z$ is constant, to check that $\alpha_1$ lies in the necessary summand, it suffices to check after passage to an algebraic closure of the base field.  After passing to an algebraic closure, to check non-triviality, we may appeal to realization.

We first treat the case where $k = \cplx$.  In that case, \cite[Theorem 5.5]{AsokFaselSpheres} shows that the map given by complex realization determines an isomorphism $\bpi_{p-1,p+1}^{\aone}(\mathrm{SL}_p)(\cplx) \isomto \pi_{2p}(\mathrm{SU}(p))$.  Serre showed that, after inverting $(p-1)!$, the group $\mathrm{SU}(p)$ splits as a product of odd-dimensional spheres \cite[V.3 Corollaire 1]{Serre}.  In fact, the argument proving Theorem \ref{thm:suslinsplitting} realizes Serre's splitting explicitly.  On the other hand, the $p$-primary subgroup of $\pi_{2p}(S^{2n-1})$, $n \leq p$ is trivial (the statement of  \cite[Proposition 11]{Serre} holds for $n = 2$ as well since $p$ is assumed an odd prime).  Combining these observations, the $p$-primary subgroup of $\pi_{2p}(\mathrm{SU}(p))$ is isomorphic to $\pi_{2p}(S^3)$, so a generator for the former realizes the topological class $\alpha_1^{\topo}$.

The case where $k$ has positive characteristic is treated similarly, but one appeals to \'etale realization instead of complex realization; we refer the reader to the discussion around Lemma~\ref{lem:etalerealizationoftau} for the notation.  The \'etale homotopy type of $\mathrm{SL}_p$ can be described by \cite[Theorem 1]{FriedlanderParshall}.  Indeed, $\mathrm{Et}(\mathrm{SL}_p) \cong \mathrm{SL}_p({\cplx})^{\wedge}_p$ via the comparison maps described before Lemma~\ref{lem:etalerealizationoftau}, while the latter homotopy type is identified with $\mathrm{SU}(p)^{\wedge}_p$ since the inclusion $\mathrm{SU}(p) \hookrightarrow \mathrm{SL}_p(\cplx)$ is a homotopy equivalence.  In that case, $\bpi_{p-1,p+1}^{\aone}(\mathrm{SL}_p)(\bar{k}) \cong \Z/p$ after $p$-completion and the argument of \cite[Theorem 5.5]{AsokFaselSpheres} provides an explicit generator of the former group, which is compatible with ``lifting to characteristic zero" and is thus non-trivial after \'etale realization.


Similarly, \cite[Lemma 3.3.3]{AsokHoyoisWendtOctonion} guarantees that $\mathrm{Et}(Q_{2n-1}) \cong (S^{2n-1})^{\wedge}_p$, which allows us to appeal to Serre's vanishing statement from the preceding paragraph to conclude that $\alpha_1$ projects non-trivially to the $p$-primary component of $\pi_{2p}(S^3)^{\wedge}_p$ and is thus non-trivial.  The statements about $\pone$-stable non-triviality follow from the fact that the classes $\alpha_1^{\topo}$ are stably non-trivial.


Finally, for Point (3), we proceed as follows.  Since $\bpi_{p-1,p+1}^{\aone}(\mathrm{SL}_p)$ is a constant sheaf of abelian groups, the $\mathbf{GW}$-module structure factors through the rank homomorphism $\mathbf{GW} \to \Z$.  It follows that $\alpha_1$ is a $p_{\epsilon}$-torsion class.  We may then appeal to Theorem~\ref{thm:weightshifting}.  Therefore, $\tau\alpha_1$ determines a class in $\bpi_{p,p}^{\aone}(\mathrm{SL}_2)/[\tau]\cdot \bpi_{p,p+1}^{\aone}(\mathrm{SL}_2)$.  We abuse terminology and write $\tau\alpha_1$ for any lift of the image of $\alpha_1$ under $\tau$ to $\bpi_{p,p}^{\aone}(\mathrm{SL}_2)(k)$.  To conclude non-triviality of $\tau\alpha_1$, we simply appeal to Point (2) and Lemma~\ref{lem:complexrealizationoftau} or \ref{lem:etalerealizationoftau}.  The stable non-triviality of $\tau\alpha_1$ follows in a similar fashion.
\end{proof}

\begin{rem}
\label{rem:explicitalpha1}
For an explicit representative of the class $\alpha_1$ constructed in Proposition~\ref{prop:alpha1}, one simply unwinds the references to \cite{AsokFaselSpheres}.  Indeed, consider the $\aone$-fiber sequence
\[
\mathrm{SL}_p \longrightarrow \mathrm{SL}_{p+1} \longrightarrow Q_{2p+1}.
\]
The $\mathrm{SL}_p$-torsor $\mathrm{SL}_{p+1} \to Q_{2p+1}$ is classified by a morphism $Q_{2p+1} \to \mathrm{BSL}_p$.  This classifying morphism is adjoint to a ``clutching function", i.e., an $\aone$-homotopy class of maps $S^{p-1,p+1} \to \mathrm{SL}_p$.  Using Suslin matrices (see \cite[Lemma 5.1.4]{AsokFaselHopkinsNilpotence}), one sees that this clutching function is killed by multiplication by $p!$, which provides the required generator.  In fact, this observation provides the key geometric input to Theorem~\ref{thm:suslinsplitting}.
\end{rem}

\begin{rem}
\label{ex:alpha1at2}
For $p = 2$ and $k$ any field, $\bpi_{1,3}^{\aone}(\mathrm{SL}_2) \cong (\KMW_2)_{-3} \cong \mathbf{W}$, which is not a constant sheaf in contrast to the assertion of Proposition~\ref{prop:alpha1}.  The Tate suspension $\Sigma_{\gm{}}\eta: S^{1,3} \to S^{1,2}$ represents a generator of this group as a $\KMW_0$-module.  The sheaf $\mathbf{W}$ is a strictly $\aone$-invariant sheaf that is $2_{\epsilon}$-torsion as a $\KMW_0$-module, and so in that case, we can build $\tau\Sigma_{\gm{}}\eta$, which gives a class in $\bpi_{2,2}^{\aone}(\mathrm{SL}_2)$; this yields a slight refinement of Proposition~\ref{prop:tautatesuspensionofeta}, and we may use $\Sigma_{\gm{}}\eta$ as a model of $\alpha_1$ at the prime $2$.
\end{rem}

\begin{ex}
\label{ex:alpha1at3}
For $p = 3$, an explicit description of $\bpi_{2,4}^{\aone}(\mathrm{SL}_2)$ follows by combining \cite[Theorem 3.3 and Lemma 7.2]{AsokFaselThreefolds}.  Indeed, those results yield an exact sequence of the form
\[
\mathbf{I} \longrightarrow \bpi_{2,4}^{\aone}(\mathrm{SL}_2) \longrightarrow \Z/12 \longrightarrow 0.
\]
If $-1$ is a sum of squares in the base field, then a classic result of Pfister implies that $\mathbf{I}$ is $2$-primary torsion sheaf \cite[Proposition 31.4]{EKM} and thus killed by inverting $2$.  The $3$-primary part of $\bpi_{2,4}^{\aone}(\mathrm{SL}_2)$ is thus exactly $\Z/3$.  An explicit generator of the factor of $\Z/12$ is the morphism adjoint to the map $\mathrm{Sp}_4/\mathrm{Sp}_2 \to \mathrm{BSp}_2$ classifying the $\mathrm{Sp}_2$-torsor $\mathrm{Sp}_4 \to \mathrm{Sp}_4/\mathrm{Sp}_2$ (under the exceptional isomorphism $\mathrm{Sp}_2 \cong \mathrm{SL}_2$).  If the base field $k$ contains a primitive $3$rd root of unity, then the class $\tau \alpha_1$ is defined and yields an explicit morphism $S^{3,3} \to \mathrm{SL}_2$.
\end{ex}

\begin{rem}
\label{rem:alpha1at5}
The existence of the class $\alpha_1$ at the prime $5$ follows from \cite[Theorem 3.2.1]{AsokFaselprime5}. More generally, note that $\bpi^{\aone}_{p-1,p+1}(\mathrm{SL}_2) \cong \bpi_{p,p+1}^{\aone}(\mathrm{BSL}_2)$ by adjunction and that the group $\bpi_{p,p+1}^{\aone}(\mathrm{BSL}_2)(k)$ may be identified with the homotopy classes of unpointed maps $[Q_{2p+1},\mathrm{BSL}_2]_{\aone}$ since $\mathrm{BSL}_2$ is $\aone$-$1$-connected \cite[Lemma 2.1]{AsokFaselSpheres}.  Therefore, the class $\alpha_1$ of Proposition~\ref{prop:alpha1} yields a non-trivial rank $2$ vector bundle on $Q_{2p+1}$, extending the constructions of vector bundles from \cite{AsokFaselprime5}.
\end{rem}

\begin{lem}
\label{lem:existence}
Fix a field $k$ and assume $p$ is an odd prime different from the characteristic of $k$.  If $k$ is not formally real, and has characteristic different from $2$, there are non-trivial $p_{\epsilon}$-torsion classes $\alpha_1^2 \in \bpi^{\aone}_{2p-3,2p}(S^{1,2})(k)$.  If furthermore $k$ contains a primitive $p$-th root of unity, then there are non-trivial $p_{\epsilon}$-torsion classes $\tau\alpha_1^2 \in \bpi^{\aone}_{2p-2,2p-1}(S^{1,2})(k)$.  If $k \subset \cplx$, then the complex realization of $\alpha_1^2$ and $\tau\alpha_1^2$ coincide with $\alpha_1^{2,\topo}$.
\end{lem}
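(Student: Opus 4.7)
The plan is to mirror the classical construction of $\alpha_1^2$ and then shift weights using Theorem~\ref{thm:weightshifting}, verifying non-triviality at each stage by realization and Serre's classical computation of $\pi_{4p-3}(S^3)$. Concretely, I define
\[
\alpha_1^2 := \alpha_1 \circ \Sigma^{p-2,p-1}\alpha_1 : S^{2p-3,2p} \longrightarrow S^{p-1,p+1} \longrightarrow S^{1,2},
\]
a class in $\bpi^{\aone}_{2p-3,2p}(S^{1,2})(k)$; the hypotheses on $k$ guarantee via base change that $\alpha_1$ is available over $k$, either by Proposition~\ref{prop:alpha1}(1) when $p$ is odd or by Example~\ref{ex:alpha1at2} for $p=2$.

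To establish the $p$-torsion claim, I first upgrade it to $p_\epsilon$-torsion of $\alpha_1$ itself. For odd $p$ this is the rank argument from the proof of Proposition~\ref{prop:alpha1}(3) (the sheaf $\bpi^{\aone}_{p-1,p+1}(SL_p)$ is constant after inverting $(p-1)!$, so its $\mathbf{GW}$-action factors through the rank homomorphism), and for $p=2$ it is part of Example~\ref{ex:alpha1at2}. The compatibility of the $\mathbf{GW}$-action with composition and suspension, via Lemma~\ref{lem:actionofKMW} and Proposition~\ref{prop:suspensionsofn}, then yields
\[
p_\epsilon \cdot \alpha_1^2 \;=\; \alpha_1 \circ \bigl(p_\epsilon \cdot \Sigma^{p-2,p-1}\alpha_1\bigr) \;=\; \alpha_1 \circ 0 \;=\; 0,
\]
so in particular $p\cdot \alpha_1^2 = 0$.

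For non-triviality I pass to complex realization when $k\subset\cplx$ and to $p$-adic \'etale realization otherwise. Both functors are symmetric monoidal and preserve suspensions, and Proposition~\ref{prop:alpha1}(2) identifies the realization of $\alpha_1$ with the topological class $\alpha_1^{top}$. Hence $\alpha_1^2$ realizes to $\alpha_1^{top}\circ \Sigma^{2p-3}\alpha_1^{top} = \alpha_1^{2,top}$ in $\pi_{4p-3}(S^3)$ (respectively its \'etale counterpart), which is a non-trivial generator of the $p$-primary subgroup by Serre's theorem recalled in the introduction.

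Finally I obtain $\tau\alpha_1^2$ by applying Theorem~\ref{thm:weightshifting} to the $p_\epsilon$-torsion class $\alpha_1^2$, using the chosen primitive $p$-th root of unity in $k$; the output is a class in $\bpi^{\aone}_{2p-2,2p-1}(S^{1,2})(k)$ modulo $[\tau]\cdot\bpi^{\aone}_{2p-2,2p}(S^{1,2})(k)$, from which I take any lift, abusing notation. Non-triviality and the identification of realizations follow from Lemmas~\ref{lem:complexrealizationoftau} and~\ref{lem:etalerealizationoftau}: the bottom row of their commutative diagrams is the identity, so the realization of $\tau\alpha_1^2$ coincides with that of $\alpha_1^2$, namely $\alpha_1^{2,top}$. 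The main technical point is upgrading $p$-torsion of $\alpha_1^2$ to $p_\epsilon$-torsion, since this is precisely what is needed to apply Theorem~\ref{thm:weightshifting}; once that is secured, the remainder of the proof consists in assembling Propositions~\ref{prop:alpha1} and~\ref{prop:suspensionsofn}, Theorem~\ref{thm:weightshifting}, and the two realization lemmas.
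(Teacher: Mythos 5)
Your proof is correct and follows essentially the same route as the paper's: define $\alpha_1^2$ as the composite $\alpha_1 \circ \Sigma^{p-2,p-1}\alpha_1$, deduce ($p_{\epsilon}$-)torsion from Proposition~\ref{prop:alpha1} (your explicit upgrade from $p$-torsion to $p_{\epsilon}$-torsion via the rank argument is exactly what the paper relies on implicitly), apply Theorem~\ref{thm:weightshifting}, and check non-triviality by complex or \'etale realization against Serre's computation of $\pi_{4p-3}(S^3)$. Note that your target $\bpi^{\aone}_{2p-2,2p-1}(S^{1,2})$ for $\tau\alpha_1^2$ is the one actually produced by Theorem~\ref{thm:weightshifting} and the one used later in Theorem~\ref{thm:reesbundles} (it is also the one consistent with realization into $\pi_{4p-3}$); the indices $(2p-2,2p+1)$ appearing in the statement of the lemma are evidently a typo.
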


\begin{proof}
As in the proof of Proposition~\ref{prop:alpha1} it suffices to prove the result over a suitable cyclotomic extension of a finite field or $\Q$, so we assume this in what follows.  Consider the composite map
\[
\xymatrix{
S^{2p-3,2p} \ar[rr]^-{\Sigma^{p-2,p-1}\alpha_1}&& S^{p-1,p+1} \ar[rr]^{\alpha_1} && {\mathbb A}^2 \smallsetminus 0;
}
\]
we refer to this composite as $\alpha_1^2$.  Appealing to Proposition~\ref{prop:alpha1} we know that $\alpha_1$ is a $p_{\epsilon}$-torsion class, so $\alpha_1^2$ is again a $p_{\epsilon}$-torsion class.  Theorem~\ref{thm:weightshifting} then applies to show that $\tau\alpha_1^2$ is also a $p_{\epsilon}$-torsion class (a priori, this is defined in the quotient $\bpi_{2p-2,2p-1}^{\aone}(S^{1,2})(k)/[\tau]\cdot \bpi_{2p-2,2p}^{\aone}(S^{1,2})(k)$, but we may abuse notation and write $\tau \alpha_1^2$ for the choice of any lift to $\bpi_{2p-2,2p-1}^{\aone}(S^{1,2})(k)$).  The non-triviality assertion follows by appeal to complex or \'etale realization.  Indeed, one knows that $\pi_{4p-3}(S^3) \cong \Z/p$ by \cite[Proposition 11]{Serre} and appeal to \cite[Lemma 3.3.3]{AsokHoyoisWendtOctonion} allows us to conclude that $\pi_{4p-3}^{\mathrm{\acute{e}t}}(Q_3) \cong \Z/p$ as well.  The assertion about $\alpha_1^2$ follows immediately by functoriality of realization, while the argument about $\tau\alpha_1^2$ follows by an additional appeal to Lemma~\ref{lem:complexrealizationoftau} or Lemma~\ref{lem:etalerealizationoftau}.
\end{proof}

\begin{ex}
\label{ex:alpha1squaredat2}
In Example~\ref{ex:alpha1at2}, we observed that $\tau\Sigma_{\gm{}}\eta$ could be taken as a model for $\alpha_1$ at $p = 2$.  Similarly, we take $\Sigma_{\gm{}}\eta \circ \Sigma_{\gm{\sma 2}} \eta$ as a model for $\alpha_1^2$ at $p = 2$.  If we take a base field that has characteristic not equal to $2$, then $-1$ is a primitive square root of unity.  Appealing to Theorem~\ref{thm:weightshifting} we see that $\tau\alpha_1^2$ exists for $p = 2$ as well.
\end{ex}

\subsubsection*{Non-constant morphisms of quadrics}
Granted Lemma~\ref{lem:existence}, we can now establish Theorems~\ref{thmintro:rees} and \ref{thmintro:wood} from the introduction.  



\begin{thm}
\label{thm:nonconstantmaps}
Fix a prime number $p > 2$.  Suppose $k$ is a field that is not formally real, has characteristic different from $p$, and contains a primitive $p$-th root of unity.
\begin{itemize}[noitemsep,topsep=1pt]
\item The element $\tau\alpha_1$ of \textup{Proposition~\ref{prop:alpha1}} and the element $\tau\alpha_1^2$ of \textup{Lemma~\ref{lem:existence}} correspond to non-constant morphisms $Q_{2p} \to Q_3$ and $Q_{4p-3} \to Q_{3}$;
\item For every integer $i \geq 0$, the $\pone$-suspension $\tau \Sigma^i_{\pone} \alpha_1$ corresponds to a non-constant morphism $Q_{2(p+i)} \longrightarrow Q_{3+2i}$.
\end{itemize}
\end{thm}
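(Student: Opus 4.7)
The plan is to deduce each statement from the affine representability theorem \cite[Theorem 4.2.1]{AsokHoyoisWendtII}, combined with the identifications $Q_{2n-1}\simeq S^{n-1,n}$ and $Q_{2n}\simeq S^{n,n}$ in $\ho{k}$ from \cite[Theorem 2]{AsokDoranFasel}. Under these identifications, the class $\tau\alpha_1\in \bpi_{p,p}^{\aone}(S^{1,2})$ corresponds to an element of $[Q_{2p},Q_3]_{\aone}$; the class $\tau\alpha_1^2$ of Lemma~\ref{lem:existence} corresponds to an element of $[Q_{4p-3},Q_3]_{\aone}$; and for $i\geq 0$, the class $\tau\Sigma^i_{\pone}\alpha_1\in \bpi_{p+i,p+i}^{\aone}(S^{1+i,2+i})$ corresponds to an element of $[Q_{2(p+i)},Q_{3+2i}]_{\aone}$. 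The bidegree bookkeeping is clean: the shift $(a-1,b+1)\leadsto(a,b)$ built into Theorem~\ref{thm:weightshifting} is precisely the shift $(n-1,n)\leadsto(n,n)$ needed to move from an odd split quadric to the next even one.

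Next, I would apply the affine representability theorem. Since every $Q_m$ is smooth affine, and the source of each map is also a smooth affine quadric with coordinate ring $R$ a finitely presented smooth $k$-algebra, \cite[Theorem 4.2.1]{AsokHoyoisWendtII} supplies a bijection between $[\Spec R,Q_m]_{\aone}$ and the set of naive $\aone$-homotopy classes of morphisms $\Spec R\to Q_m$; the hypothesis that $2$ be invertible, needed for the even-dimensional target case, is guaranteed by $\operatorname{char} k\neq p$ together with the standing assumption $\operatorname{char} k\neq 2$ inherited from Proposition~\ref{prop:alpha1}. Consequently each $\aone$-homotopy class above is represented by an honest morphism of quadrics, and any morphism representing a non-trivial class is necessarily non-constant, since a constant pointed morphism represents the zero class in $[\Spec R,Q_m]_{\aone}$.

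Non-triviality of the relevant classes has essentially already been verified. For $\tau\alpha_1$ and $\tau\alpha_1^2$, Proposition~\ref{prop:alpha1}(3) and Lemma~\ref{lem:existence} establish non-triviality by passing to complex or \'etale realization via Lemmas~\ref{lem:complexrealizationoftau} and \ref{lem:etalerealizationoftau}, together with Serre's computations $\pi_{2p}(S^3)_{(p)}\cong \Z/p$ and $\pi_{4p-3}(S^3)_{(p)}\cong \Z/p$ from \cite[Proposition~11]{Serre}. For the family $\tau\Sigma_{\pone}^i\alpha_1$, I would first note that $\Sigma_{\pone}^i\alpha_1$ remains $p_\epsilon$-torsion (the $\mathbf{GW}$-action on $\bpi_{p-1,p+1}^{\aone}(SL_p)$ factors through rank because the sheaf is constant, and this factorization is preserved by $\pone$-suspension), so Theorem~\ref{thm:weightshifting} indeed produces the class $\tau\Sigma_{\pone}^i\alpha_1$. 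Realization then sends it to a suitable suspension of $\alpha_1^{top}$, which is $\pone$-stably non-trivial by Proposition~\ref{prop:alpha1}(2), so the class survives.

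I do not anticipate a serious obstacle: the result is essentially an assembly of the weight-shifting device of Section~\ref{ss:moorespaces}, the construction of $\alpha_1$ and $\alpha_1^2$ in Section~\ref{ss:buildingvbandmaps}, and the affine representability theorem. The only place where one must be a little careful is tracking that the representing morphism is defined over $k$ rather than merely over an algebraic closure; this is already built into Proposition~\ref{prop:alpha1}(1), which locates $\alpha_1$ over the prime field (respectively $\Q[i]$) once $k$ contains a primitive $p$-th root of unity, and is preserved by the weight-shifting construction because $\tau$ is defined using a choice of primitive root already assumed to lie in $k$.
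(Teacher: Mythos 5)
Your proposal is correct and follows essentially the same route as the paper: identify $Q_{2n}\cong S^{n,n}$ and $Q_{2n-1}\cong S^{n-1,n}$ via \cite[Theorem 2]{AsokDoranFasel}, invoke affine representability \cite[Theorem 4.2.1]{AsokHoyoisWendtII} to realize the classes by actual morphisms, and get non-constancy from the non-triviality already established in Proposition~\ref{prop:alpha1} and Lemma~\ref{lem:existence} via realization. Your added remarks (the bidegree bookkeeping, the persistence of $p_\epsilon$-torsion under $\pone$-suspension for the second bullet, and the $k$-rationality of the representatives) are correct elaborations of details the paper leaves implicit; the only target quadrics here are odd-dimensional, so the caveat about invertibility of $2$ for even targets is not actually needed.
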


\begin{proof}
By \cite[Theorem 2]{AsokDoranFasel}, we know that $Q_{2n} \cong S^{n} \sma \gm{\sma n}$ and $Q_{2n-1} \cong S^{n-1} \sma \gm{\sma n}$. Thus, $\tau\alpha_1$ (resp. $\tau\alpha_1^2$) corresponds to an element of $[Q_{2p},Q_3]_{\aone}$ (resp. $[Q_{4p-3},Q_3]_{\aone}$). Now, \cite[Theorem 4.2.1]{AsokHoyoisWendtII} implies that the map $\pi_0(\Singaone Q_{2m-1})(X) \to [X,Q_{2m-1}]_{\aone}$ is a bijection for any smooth affine $k$-scheme $X = \Spec R$, allowing to conclude since $Q_{2n}$ and $Q_{2n-1}$ are smooth affine schemes for any $n\geq 1$.
\end{proof}

If $k$ is a field that is not formally real, then the $\aone$-homotopy class of any morphism $Q_{2n + \epsilon} \to Q_{2m-1}$ with $\epsilon = 0,1$ and $n > m$ is a torsion class by appeal to Proposition~\ref{prop:geometrictorsion}.  Therefore, in this range, we may compose suspensions of morphisms of quadrics and $\tau$ to yield new non-constant morphisms of quadrics.

\begin{prop}
\label{prop:nu2sigma2}
If $k$ is an algebraically closed field having characteristic not equal to $2$, then for every $i > 0$ there are non-constant morphisms $Q_{11+2i} \to Q_{5+2i}$ and $Q_{23+2i} \to Q_{9+2i}$.
\end{prop}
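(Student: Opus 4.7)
The plan is to realize suspensions of the topological Hopf squares $\nu^{2}$ and $\sigma^{2}$ as morphisms of quadrics by applying the weight-shifting operator $\tau$ of Theorem~\ref{thm:weightshifting} to an \emph{inner} factor in an otherwise naive composite of motivic Hopf maps. The motivic Hopf constructions (arising, e.g., from the classifying map $Sp_{4}/Sp_{2}\to BSp_{2}$ discussed in Example~\ref{ex:alpha1at3} and from its octonionic analogue used in \cite{AsokHoyoisWendtOctonion}) provide morphisms of quadrics $\nu\colon Q_{7}\to Q_{4}$ and $\sigma\colon Q_{15}\to Q_{8}$, i.e.\ classes in $\bpi^{\aone}_{3,4}(S^{2,2})(k)$ and $\bpi^{\aone}_{7,8}(S^{4,4})(k)$ that realize the classical Hopf maps. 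The naive composite $\nu\circ \Sigma^{1,2}\nu\colon S^{4,6}\to S^{2,2}$ realizes $\nu^{2}_{top}$ but has source weight exceeding simplicial degree by two; one application of $\tau$ to an inner factor corrects this discrepancy by one.

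For the $\nu^2$-family, I would consider $\Sigma^{1,2}\nu \in \bpi^{\aone}_{4,6}(Q_{7})(k)$. Since $Q_{7}\simeq \A^{4}\setminus 0$ and $6>4$, Proposition~\ref{prop:geometrictorsion} forces this class to be torsion. Over an algebraically closed field of characteristic $\neq 2$, $k$ contains every root of unity and $n_{\epsilon}=n$, so Theorem~\ref{thm:weightshifting} produces a class $\tau(\Sigma^{1,2}\nu)\in \bpi^{\aone}_{5,5}(Q_{7})(k)$ well-defined modulo $[\tau]\cdot\bpi^{\aone}_{5,6}(Q_{7})(k)$. Picking any lift and precomposing with $\Sigma^{3,3}_{\pone}\nu\colon Q_{13}\to Q_{10}$ yields
\[
\tau(\Sigma^{1,2}\nu)\circ \Sigma^{3,3}_{\pone}\nu\colon S^{6,7}\longrightarrow S^{5,5}\longrightarrow S^{3,4},
\]
i.e.\ a class $Q_{13}\to Q_{7}$, and a further $(i-1)$-fold $\pone$-suspension produces $Q_{11+2i}\to Q_{5+2i}$ for every $i\geq 1$. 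The recipe for $\sigma^{2}$ is parallel: $\Sigma^{1,2}\sigma\in \bpi^{\aone}_{8,10}(Q_{11})(k)$ is torsion by Proposition~\ref{prop:geometrictorsion} (as $10>6$), and the composite $\tau(\Sigma^{1,2}\sigma)\circ \Sigma^{5,5}_{\pone}\sigma\colon S^{12,13}\to S^{9,9}\to S^{5,6}$ gives a class $Q_{25}\to Q_{11}$, which $\pone$-suspends to the required $Q_{23+2i}\to Q_{9+2i}$.

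Non-triviality is checked by realization. Lemma~\ref{lem:complexrealizationoftau} (if $k\hookrightarrow \cplx$) or Lemma~\ref{lem:etalerealizationoftau} (in positive characteristic, taking $\ell=2$) shows that $\tau$ acts as the identity on homotopy groups and the $[\tau]\cdot$-ambiguity vanishes since $[\tau]$ realizes to zero. The realized composites are thus $\Sigma^{3}\nu^{2}_{top}\in \pi_{13}(S^{7})$ and $\Sigma^{3}\sigma^{2}_{top}\in \pi_{25}(S^{11})$, both non-zero because $\nu^{2}_{top}$ generates $\pi^{s}_{6}\cong \Z/2$ and $\sigma^{2}_{top}$ generates a $\Z/2$-summand of $\pi^{s}_{14}$ (so their finite suspensions remain non-zero by detection under further stabilization). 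Since all source and target quadrics are odd-dimensional smooth affine varieties, \cite[Theorem 4.2.1]{AsokHoyoisWendtII} lifts these non-trivial $\aone$-homotopy classes to honest morphisms of schemes, as required.

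The hard part is primarily organizational: one must arrange the internal-versus-external distribution of $\pone$-suspensions and the $\tau$-shift so that both ends of the composite end up in bidegrees of the form $S^{n-1,n}$ of an odd-dimensional split quadric. The crucial combinatorial observation is that applying $\tau$ to the \emph{outer} composite $\Sigma^{k,k}_{\pone}\nu^{2}$ would yield source $S^{5+k,5+k}$, giving only even-dimensional source quadrics; placing $\tau$ on the inner factor $\Sigma^{1,2}\nu$ instead produces an even source $S^{5,5}$ that is then $\pone^{3}$-suspended to the odd source $S^{6,7}=Q_{13}$ by precomposition with the genuine quadric-morphism $\Sigma^{3,3}_{\pone}\nu$. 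This also explains why the base case must be $i=1$ rather than $i=0$.
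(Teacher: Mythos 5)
Your proposal is correct and rests on exactly the same ingredients as the paper's proof: the motivic Hopf maps $\nu\colon S^{3,4}\to S^{2,2}$ and $\sigma\colon S^{7,8}\to S^{4,4}$, Proposition~\ref{prop:geometrictorsion} to see that the relevant composites are torsion, the weight-shifting map of Theorem~\ref{thm:weightshifting}, non-triviality via Lemma~\ref{lem:complexrealizationoftau} or Lemma~\ref{lem:etalerealizationoftau} together with the stable non-triviality of $\nu_{top}^2$ and $\sigma_{top}^2$, and \cite[Theorem 4.2.1]{AsokHoyoisWendtII} to produce actual morphisms. The only genuine difference is where $\tau$ is inserted. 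You apply $\tau$ to the inner factor $\Sigma^{1,2}\nu$ (a class valued in $Q_7$) and then precompose with $\Sigma^{3,3}_{\pone}\nu$, obtaining $Q_{13}\to Q_7$ as your base case. The paper instead Tate-suspends the \emph{outer} Hopf map once, forms the full composite $\nu^2=\Sigma_{\gm{}}\nu\circ\Sigma^{1,3}\nu\colon S^{4,7}\to S^{2,3}$ (resp.\ $\sigma^2\colon S^{10,13}\to S^{4,5}$), notes it is torsion by the same Proposition~\ref{prop:geometrictorsion}, and applies $\tau$ once to the whole composite to get $Q_{11}\to Q_5$ (resp.\ $Q_{23}\to Q_9$) directly, then $\pone$-suspends. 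Consequently your closing ``combinatorial observation'' --- that placing $\tau$ on the outer composite can only produce even-dimensional source quadrics --- is an artifact of your normalization of $\nu^2$ with target $S^{2,2}$: after one extra Tate suspension of the outer factor the target becomes $S^{2,3}\simeq Q_5$, the outer placement of $\tau$ works, and it even reaches the case $i=0$ ($Q_{11}\to Q_5$), which your construction, requiring an $(i-1)$-fold suspension of $Q_{13}\to Q_7$, does not. One small caution, which applies equally to the paper's own write-up: an algebraically closed field of characteristic $\ell>0$ does \emph{not} contain primitive $\ell$-th roots of unity, so the appeal to Theorem~\ref{thm:weightshifting} should be made for the prime-to-characteristic part of the torsion order (harmless here, since the classes being detected are $2$-primary and the characteristic is assumed different from $2$).
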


\begin{proof}
Let $\nu: S^{3,4} \to S^{2,2}$ and $\sigma: S^{7,8} \to S^{4,4}$ be the motivic Hopf maps: $\nu$ is obtained by applying the Hopf construction to the multiplication on $\mathrm{SL}_2$ and $\sigma$ is obtained similarly from the multiplication on the unit norm elements in the split octonion algebra.  The Tate suspensions $\Sigma_{\gm{}} \nu$ and $\Sigma_{\gm{}}\sigma$ thus yield morphisms $S^{3,5} \to S^{2,3}$ and $S^{7,9} \to S^{4,5}$.  Composing with a suitable suspension, we obtain morphisms $\nu^2: S^{4,7} \to S^{2,3}$ and $\sigma^2: S^{10,13} \to S^{4,5}$.  By appeal to Proposition~\ref{prop:geometrictorsion}, these classes are torsion and thus it makes sense to speak of $\tau \nu^2$ and $\tau \sigma^2$.  These classes have complex realization $\nu_{\topo}^2$ and $\sigma_{\topo}^2$ by their very definition, and therefore, are stably non-trivial.  One concludes that these classes yield the relevant non-constant morphisms as in the Proof of Theorem~\ref{thm:nonconstantmaps}.
\end{proof}

\begin{rem}
If $X = \Spec R$ is a smooth affine scheme, then a morphism $X \to {\mathbb A}^n \smallsetminus 0$ corresponds to a unimodular row of length $n$, yielding in turn a projective $R$-module $P$ of rank $n-1$ over $X$. A morphism $X \to Q_{2n-1}$ corresponds to a unimodular row, together with the choice of an isomorphism $P \oplus R \cong R^{n}$.  Therefore, the constructions of Proposition~\ref{prop:tautatesuspensionofeta}, Theorem~\ref{thm:nonconstantmaps} and Proposition~\ref{prop:nu2sigma2} all give rise to unimodular rows (together with choices of isomorphisms as above) of small length.
\end{rem}

\subsubsection*{Rees bundles}
Finally, we establish Theorem~\ref{thmintro:rees} from the introduction in a slightly more general form. For any $n\geq 1$, recall that $\mathbb{P}^n$ admits a Jouanolou device $p_n\colon X_{2n-1}\to \mathbb{P}^n$; $X_{2n-1}$ is a smooth affine variety and $p_{n}$ is an $\aone$-weak equivalence. The clutching map $\mathbb{P}^{n}\to (\mathbb{P}^1)^{\wedge n}$ lifts to a morphism of smooth affine $k$-schemes $c_n\colon X_{2n-1}\to Q_{2n}$.

\begin{thm}
\label{thm:reesbundles}
Fix a prime number $p$ and assume $k$ is a field having characteristic different from $p$.  Suppose $k$ is not formally real, and contains a primitive $p$-th root of unity.  The image of $\tau\alpha_1^2$ under the map $c_{p-1}^*\colon [Q_{4p-2},\mathrm{BSL}_2]_{\aone} \to [X_{2p-1},\mathrm{BSL}_2]_{\aone}$ is a class $\xi_p$; this class corresponds to a (non-trivial) rank $2$ algebraic vector bundle on $X_{2p-1}$.  If $k$ admits a complex embedding, then $\xi_p$ is mapped to the class $\xi_p^{\topo}$ of Rees.
\end{thm}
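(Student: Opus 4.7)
The strategy is to assemble several inputs already developed in the paper: the weight-shifted class $\tau\alpha_1^2$ from Lemma~\ref{lem:existence}, the identifications of quadrics and projective-space quotients with motivic spheres, the $\aone$-equivalence between the Jouanolou device $X_n$ and ${\mathbb P}^n$, affine representability of rank two bundles, and the compatibility of $\tau$ with complex realization from Lemma~\ref{lem:complexrealizationoftau}.

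First, I would reinterpret $\tau\alpha_1^2$ as an element of $[Q_{4p-2}, BSL_2]_{\aone}$. Using the $\aone$-equivalence $Q_{4p-2} \simeq S^{2p-1,2p-1}$ (\cite[Theorem 2]{AsokDoranFasel}) together with $SL_2 \simeq S^{1,2}$ and the $\aone$-loop/suspension adjunction, $\bpi^{\aone}_{2p-2,2p-1}(S^{1,2})$ is canonically identified with $[Q_{4p-2}, BSL_2]_{\aone}$, and the class $\tau\alpha_1^2$ (interpreted via Theorem~\ref{thm:weightshifting}) supplies the required element. Next, I would produce the homomorphism $[Q_{4p-2}, BSL_2]_{\aone} \to [X_{2p}, BSL_2]_{\aone}$ by precomposition with a motivic collapse map: the Morel--Voevodsky purity identification ${\mathbb P}^n/{\mathbb P}^{n-1} \simeq \pone^{\wedge n} \simeq S^{n,n}$ realizes an appropriate projective-space quotient as $Q_{4p-2}$, and composing with the $\aone$-equivalence $X_{2p} \simeq_{\aone} {\mathbb P}^{2p}$ coming from the Jouanolou--Thomason construction yields a morphism $X_{2p} \to Q_{4p-2}$ in $\ho{k}$. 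The class $\xi_p$ is then defined as the image of $\tau\alpha_1^2$ under the induced map.

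I would then invoke affine representability to upgrade $\xi_p$ to an honest algebraic vector bundle. Since $X_{2p}$ is smooth affine by construction and rank two $SL_2$-torsors on a smooth affine scheme correspond bijectively to rank two algebraic vector bundles with trivial determinant, the affine representability theorem cited as \cite[Theorem 1]{AsokHoyoisWendtI} forces $\xi_p$ to be represented by such a bundle. For the comparison with Rees' class under a complex embedding $k \hookrightarrow \cplx$, I would apply Lemma~\ref{lem:complexrealizationoftau} together with Lemma~\ref{lem:existence}, which imply that the complex realization of $\tau\alpha_1^2$ equals the topological class $\alpha_1^{2,top}$ generating the $p$-torsion in $\pi_{4p-3}(S^3)$; functoriality of complex realization and the fact that it carries the motivic collapse onto the topological collapse ${\mathbb P}^n(\cplx) \to S^{2n}$ then imply that the realization of $\xi_p$ is exactly the bundle Rees builds from $\alpha_1^{2,top}$, namely $\xi_p^{top}$. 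Non-triviality of $\xi_p^{top}$ (the original result of Rees) then retroactively forces $\xi_p$ itself to be non-trivial.

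I expect the main obstacle to lie in the naturality bookkeeping of the final step: one must verify, in a single coherent diagram, that the $\aone$-equivalences $Q_{4p-2} \simeq S^{2p-1,2p-1}$, $SL_2 \simeq S^{1,2}$, ${\mathbb P}^n/{\mathbb P}^{n-1} \simeq \pone^{\wedge n}$, and $X_{2p} \simeq {\mathbb P}^{2p}$ all intertwine correctly with complex realization, so that the motivic construction of $\xi_p$ indeed realizes to Rees' classical bundle rather than to a twisted or trivialized variant. This is a sequence of compatibility checks rather than a new computation; once they are in place, no further homotopy-theoretic input beyond the machinery already developed in Sections~\ref{ss:moorespaces} and \ref{ss:buildingvbandmaps} is required.
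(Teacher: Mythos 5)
Your proposal follows essentially the same route as the paper: identify $\tau\alpha_1^2$ with a class in $[Q_{4p-2},BSL_2]_{\aone}$ via the adjunction/clutching identification $Q_{4p-2}\simeq S^{2p-1,2p-1}$, pull back along the collapse map from the Jouanolou device of projective space, invoke affine representability to produce an honest rank $2$ bundle, and use compatibility of $\tau$ with realization to match Rees' class and deduce non-triviality. The only caveats are bookkeeping ones you largely share with the statement itself: the collapse ${\mathbb P}^{2p-1}\to {\mathbb P}^{2p-1}/{\mathbb P}^{2p-2}\simeq S^{2p-1,2p-1}$ naturally lands you on $X_{2p-1}$ (as in Theorem~\ref{thmintro:rees}) rather than $X_{2p}$, and for fields admitting no complex embedding the non-triviality of $\xi_p$ should be checked via the \'etale realization of Lemma~\ref{lem:etalerealizationoftau} rather than by appeal to Rees' topological theorem alone.
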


\begin{proof}
First we treat the case $p = 2$.  In that case, the result follows from either Proposition~\ref{prop:tautatesuspensionofeta} and \cite[Theorem 4.3.6]{AsokDoranFasel}.  Indeed, for $p = 2$, $\Sigma_{\gm{}}\eta$ is a model for $\alpha_1$ by the discussion of Example~\ref{ex:alpha1at2}.  Composing $\Sigma_{\pone} \eta$ and $\tau \Sigma_{\gm{}}\eta$ gives a class that is mapped to $\eta_{\topo}^2$ under complex or \'etale realization.  This composite is a morphism $Q_5 \to Q_{3}$ and the clutching construction of \cite[Theorem 4.3.6]{AsokDoranFasel} yields a class in $[Q_6, \mathrm{BSL}_2]_{\aone}$.  The fact that the class in question remains non-trivial in $X_{3}$ follows immediately from compatibility with complex realization and the relevant vector bundle exists by \cite[Theorem 5.2.3]{AsokHoyoisWendtI}.

The case of odd primes follows in an analogous fashion by replacing appeal to Proposition~\ref{prop:tautatesuspensionofeta} with references to Lemma~\ref{lem:existence}.  More precisely, $\tau \alpha_1^2$ gives a $p$-torsion class in $\bpi_{2p-2,2p-1}^{\aone}({\mathbb A}^{2} \smallsetminus 0)$.  By Lemma~\ref{lem:existence}, $\tau \alpha_1^2$ can be viewed as a non-trivial class in $\bpi_{2p-2,2p-1}^{\aone}(\mathrm{SL}_{2})$.  This element thus corresponds to an actual morphism $Q_{4p-3} \to Q_3$ by Theorem~\ref{thm:nonconstantmaps} and, by means of the clutching construction of \cite[Theorem 4.3.6]{AsokDoranFasel} yields a rank $2$ vector bundle on $Q_{4p-2}$.  The fact that the class in question remains non-trivial under the map $[Q_{4p-2},\mathrm{BSL}_2]_{\aone} \to [X_{2p-1},\mathrm{BSL}_2]_{\aone}$ follows immediately from compatibility with complex realization.  Again, the relevant vector bundle on $X_{2p-1}$ exists by \cite[Theorem 5.2.3]{AsokHoyoisWendtI}.
\end{proof}

\begin{rem}
By the Hartshorne--Serre correspondence (see, e.g., \cite[Theorem 1.1]{Arrondo}), the rank $2$ bundles constructed in Theorem~\ref{thm:reesbundles} correspond to codimension $2$ subvarieties of $Q_{4p-2}$ that are local complete intersections but not complete intersections since the Rees bundles are not direct sums of line bundles.
\end{rem}


\begin{footnotesize}
\bibliographystyle{alpha}
\bibliography{motivicvectorbundlesonpn}
\end{footnotesize}
\Addresses
\end{document}